\theoremstyle{plain}
\newtheorem{thm}{Theorem}[section]
\crefname{thm}{Theorem}{Theorems}
\theoremstyle{plain}
\newtheorem{lem}[thm]{Lemma}
\crefname{lem}{Lemma}{Lemmas}
\theoremstyle{plain}
\newtheorem{cor}[thm]{Corollary}
\theoremstyle{plain}
\newtheorem*{claim*}{Claim}
\crefname{claim}{Claim}{Claims}
\theoremstyle{definition}
\theoremstyle{plain}
\newtheorem{conjecture}[thm]{Conjecture}
\newtheorem{prop}[thm]{Proposition}
\theoremstyle{definition}
\theoremstyle{definition}
\theoremstyle{plain}
\newtheorem{claim}[thm]{Claim}
\crefname{appsec}{Appendix}{Appendices}
\date{}
\let\originalleft\left
\let\originalright\right
\renewcommand{\left}{\mathopen{}\mathclose\bgroup\originalleft}
\renewcommand{\right}{\aftergroup\egroup\originalright}
\renewcommand*{\UrlTildeSpecial}{%
  \do\~{%
    \mbox{%
      \fontfamily{ptm}\selectfont
      \textasciitilde
    }%
  }%
}%
\let\Url@force@Tilde\UrlTildeSpecial
\renewcommand{\Pr}{\mathbb{P}}
\begin{document}

\title{Dense induced bipartite subgraphs in triangle-free graphs}

\author{Matthew Kwan \thanks{Department of Mathematics, Stanford University, CA 94305. Email: \href{mailto:mattkwan@stanford.edu} {\nolinkurl{mattkwan@stanford.edu}}. Research supported in part by SNSF project 178493.}
\and
Shoham Letzter\thanks{ETH Institute for Theoretical Studies, ETH Zurich, Switzerland. Email: \href{mailto:shoham.letzter@eth-its.ethz.ch} {\nolinkurl{shoham.letzter@math.ethz.ch}}. Research supported by Dr.\ Max R\"ossler, the Walter Haefner Foundation and the ETH Zurich Foundation.}
\and
Benny Sudakov\thanks{Department of Mathematics, ETH Zurich, Switzerland. Email:
\href{mailto:benjamin.sudakov@math.ethz.ch} {\nolinkurl{benjamin.sudakov@math.ethz.ch}}.
Research supported in part by SNSF grant 200021-175573.}
\and
Tuan Tran\thanks{Department of Mathematics, ETH Zurich, Switzerland. Email:
\href{mailto:manh.tran@math.ethz.ch} {\nolinkurl{manh.tran@math.ethz.ch}}.
Research supported by the Humboldt Research Foundation.}}

\maketitle
\global\long\def\cA{\mathcal{A}}
\global\long\def\Ex{\mathbb{E}}
\global\long\def\RR{\mathbb{R}}
\global\long\def\E{\mathbb{E}}
\global\long\def\cF{\mathcal{F}}
\global\long\def\Var{\operatorname{Var}}
\global\long\def\NN{\mathbb{N}}
\global\long\def\eps{\varepsilon}
\global\long\def\one{\boldsymbol{1}}
\global\long\def\range#1{\left[#1\right]}
\global\long\def\Bin{\operatorname{Bin}}
\global\long\def\cT{\mathcal{T}}
\global\long\def\floor#1{\left\lfloor #1\right\rfloor }
\global\long\def\ceil#1{\left\lceil #1\right\rceil }

\begin{abstract}
    The problem of finding dense induced bipartite subgraphs in $H$-free graphs has a long history, and was posed 30 years ago by Erd\H os, Faudree, Pach and Spencer. In this paper, we obtain several results in this direction. First we prove that any $H$-free graph with minimum degree at least $d$ contains an induced bipartite subgraph of minimum degree at least $c_H \log d/\log \log d$, thus nearly confirming one and proving another conjecture of Esperet, Kang and Thomass\'e. Complementing this result, we further obtain optimal bounds for this problem in the case of dense triangle-free graphs, and we also answer a question of Erd\H os, Janson, \L uczak and Spencer.     
\end{abstract}

\section{Introduction}
    The Max Cut problem seeks to find the largest bipartite subgraph of a graph $G$. This problem has been studied extensively both from the algorithmic perspective in computer science and optimisation, and from the extremal perspective in combinatorics. Let $b(G)$ denote the size of the largest bipartite subgraph of a graph $G$ with $n$ vertices and $e$ edges. The extremal aspect of the Max Cut problem asks for lower bounds on $b(G)$ in terms of $n$ and $e$. A classical result of Erd\H os from 1965 (see~\cite{Erd65}) states that $b(G) \ge e/2$, and the complete graph on $n$ vertices shows that the constant $1/2$ in this bound is asymptotically tight. As such, much of the focus has been on determining the error term $b(G)-e/2$. For instance, Erdwards \cite{Erdw73} proved $b(G) - e/2 \ge (\sqrt{8e+1}-1)/8$, and noted that equality holds for complete graphs. We refer the reader to \cite{Alo96,BolSco,CFKS} and the references therein for further results in this direction. Following these works, there has been a lot of research concerning maximum bipartite subgraphs of various restricted classes of graphs. One class which has drawn most of the attention is $H$-free graphs, with work dating back to Erd\H os and Lov\'asz (see \cite{Erd76}). Despite great effort, there are only a few graphs $H$ for which we know (asymptotically) optimal lower bounds on $b(G)-e/2$, as $G$ ranges over all $H$-free graph on $e$ edges (see \cite{Alo96,ABKS,AKS2}).

    In the dense case (i.e.\ $e=\Omega(n^2)$), there is a longstanding conjecture of Erd\H os \cite{Erd76} which posits that given any $n$-vertex triangle-free graph $G$, one can delete at most $n^2/25$ edges to make it bipartite (see~\cite{EFPS,EGS} for partial results and \cite{Sud07} for the analogous question in the $H$-free setting). In order to make progress on Erd\H os's problem, Erd\H os, Faudree, Pach and Spencer~\cite{EFPS} initiated the study of the size of the largest {\em induced} bipartite subgraphs in triangle-free graphs, and proved some results in this direction. Similar results were also the main subject of a later paper by Erd\H os, Janson, \L uczak and Spencer~\cite{EJLS}.

    In addition to maximising the number of edges, it is also very natural to 
    study induced bipartite subgraphs of large minimum degree. This more local approach was taken recently by Esperet, Kang and Thomass\'e in \cite{EKT}, where they made a number of intriguing conjectures.

    \begin{conjecture} \label{conj:log}
    $\hfill$
		\begin{enumerate}[\rm (i)]
            \item \label{itm:conj-general}
                Any graph with minimum degree at least $d$ contains either a complete subgraph on $\omega_{d\to \infty}(1)$ vertices or an induced bipartite subgraph with minimum degree $\omega_{d\to \infty}(1)$.
            \item \label{itm:conj-triangle-free}
                There is a constant $c>0$ such that any triangle-free graph with minimum degree at least $d$ contains an induced bipartite subgraph of minimum degree at least $c \log d$.
		\end{enumerate}
    \end{conjecture}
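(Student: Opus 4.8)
The plan is to deduce part~(i) from part~(ii) at the end, and to spend most of the effort on part~(ii). So let $G$ be triangle-free with $\delta(G)\ge d$ and set $n=|V(G)|$. If $n<\tfrac52 d$ then $\delta(G)>\tfrac25 n$, so by the classical Andr\'asfai--Erd\H os--S\'os theorem $G$ is bipartite, and $G$ itself is an induced bipartite subgraph of minimum degree $d\ge c\log d$. Hence we may assume $n\ge\tfrac52 d$; note that $n/d$ is then bounded below by an absolute constant but may be arbitrarily large. We aim to find disjoint independent sets $P,Q$ in $G$ such that every vertex of $P$ has at least $c\log d$ neighbours in $Q$ and conversely; then $G[P\cup Q]$ is the required subgraph.

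The construction of $P$ begins from a large independent set and that of $Q$ from a large independent set of the graph that $P$ sees. Concretely: by Shearer's bound the triangle-free graph $G$ has an independent set $X$ with $|X|\ge c_1 n\log d/d$. Since $X$ is independent, the $\ge d|X|$ edges incident to $X$ all go to $V\setminus X$, so the average over $V\setminus X$ of the number of neighbours in $X$ is at least $d|X|/n\ge c_1\log d$; cleaning away the vertices below half the average yields $B\subseteq V\setminus X$ with $|B|\ge|X|/2$ in which every vertex has $\Omega(\log d)$ neighbours in $X$, and a symmetric cleaning gives a large $X'\subseteq X$ in which every vertex has $\Omega(\log d)$ neighbours in $B$. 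Now $G[B]$ is triangle-free; if $Q$ is (a large subset of) an independent set of $G[B]$ and $P=X'$, then $G[P\cup Q]$ is bipartite and the $Q$-side degree condition is automatic. Everything therefore reduces to the \emph{spreading} step: guaranteeing that for a large proportion of $x\in X'$, a constant fraction of the $\Omega(\log d)$ neighbours of $x$ in $B$ survive into $Q$.

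This spreading step is the crux, and is exactly where we expect the optimal bound to demand a genuinely sharper argument: a maximum independent set of $G[B]$ can miss the neighbourhoods $N(x)\cap B$ almost entirely, since triangle-free graphs may have unbounded fractional chromatic number. The proposal is to choose $Q$ randomly instead of greedily. Run the permutation-based Shearer process on $G[B]$ --- order the vertices of $B$ uniformly at random and add a vertex to $Q$ exactly when it precedes all of its $G[B]$-neighbours, then prune --- so that $\E|Q|$ is, by Shearer's argument, of order $|B|\log d'/d'$ with $d'$ the average degree of $G[B]$. The hope is that, after first passing to a subgraph of $G[B]$ of comparable size in which degrees are not wildly unbalanced (so that the Shearer-process marginals $\Pr[b\in Q]$ are uniform up to a constant), one gets $\E\,|N(x)\cap Q|=\Omega(\log d)$ for every relevant $x$, whereupon a second-moment/concentration argument keeps a large family of $x$ with $|N(x)\cap Q|\ge c\log d$ for good. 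The difficulty is purely quantitative: the naive implementation yields a self-improving inequality of the shape $\delta\gtrsim\log d/\log\delta$ --- the $\log\delta$ coming from a union bound over the possible distributions of the surviving neighbours --- which already gives $\delta=\Theta(\log d/\log\log d)$; extracting the optimal $\delta=\Theta(\log d)$ requires killing this spurious logarithm, presumably by analysing the joint law of $(\one[b\in Q])_{b\in N(x)}$ directly, or via an entropy/container-style count, rather than a union bound.

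For part~(i), given $G$ with $\delta(G)\ge d$, if $G$ contains a clique on $k(d)$ vertices (where $k(d)$ is any sufficiently slowly growing function, say $k(d)=\lfloor\log\log\log d\rfloor$) we are done; otherwise $G$ is $K_{k(d)}$-free. A standard iterated-neighbourhood argument --- repeatedly replacing $G$ by the neighbourhood of a vertex whose neighbourhood has large minimum degree, and otherwise exploiting the resulting local structure --- shows that a $K_r$-free graph of minimum degree $d$ always contains an induced triangle-free subgraph of minimum degree at least $g_r(d)$ for some $g_r$ with $g_r(d)\to\infty$ as $d\to\infty$ (for fixed or slowly growing $r$). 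Applying part~(ii) to such a subgraph produces an induced bipartite subgraph of minimum degree $\Omega\bigl(\log g_{k(d)}(d)\bigr)$, which tends to infinity with $d$; this is exactly the conclusion of part~(i).
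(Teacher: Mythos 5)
First, a point of orientation: the statement you are proving is stated in the paper as an open \emph{conjecture} (of Esperet, Kang and Thomass\'e); the paper does not prove it. What the paper proves is the weaker bound $c_H\log d/\log\log d$ (its Theorem 1.2), and it derives part (i) from that weaker bound by letting the forbidden clique size grow slowly with $d$. Your proposal stalls at exactly the same place, and by your own account: the entire content of part (ii) beyond the known $\log d/\log\log d$ bound is the ``spreading step,'' which you describe as ``the hope'' that the Shearer-process marginals behave well enough, to be resolved ``presumably by analysing the joint law \ldots or via an entropy/container-style count.'' That is not a proof; it is a statement of the open problem in different clothing. Even the claim that your naive implementation recovers $\delta=\Theta(\log d/\log\log d)$ is asserted rather than established: you would need (a) that after degree-balancing, $\Pr[b\in Q]$ is within a constant factor of uniform for the vertices $b\in N(x)\cap B$ you care about, and (b) concentration of $|N(x)\cap Q|$ for many $x$ simultaneously, where the events $\{b\in Q\}$ are strongly and negatively correlated through second neighbourhoods. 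Neither is addressed. There is also a small error earlier: removing from $V\setminus X$ the vertices with fewer than half the average number of neighbours in $X$ gives a set $B$ carrying at least half the $X$--$(V\setminus X)$ edges, hence $|B|\ge d/2$, not $|B|\ge|X|/2$ (a vertex of $B$ can have up to $|X|$ neighbours in $X$, and $|X|$ may greatly exceed $d$).

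For part (i), your reduction invokes as ``standard'' the claim that every $K_r$-free graph of minimum degree $d$ contains an \emph{induced triangle-free} subgraph of minimum degree $g_r(d)\to\infty$. This is true but is not a routine iterated-neighbourhood argument, and it is precisely the content of the paper's Section 4: when every neighbourhood $N^+(v)$ is sparse the graph is only locally sparse, not triangle-free, and one needs the random subset $U$ together with the deletion of the leftmost vertex of each triangle (and a first-moment computation balancing edges against triangles and edge--triangle incidences) to extract a triangle-free induced subgraph of minimum degree $d^{\Omega(1)}$; the dense-neighbourhood case is handled by induction on $r$. If you supply that lemma with an explicit dependence of $g_r$ on $r$, then your derivation of (i) from (a quantitative form of) (ii) is sound --- indeed this is how the paper settles (i) using only the $\log d/\log\log d$ bound. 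As written, however, both halves of your proposal rest on steps that are named but not carried out, and the half that would constitute new mathematics (removing the $\log\log d$) is explicitly left open.
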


	We remark that Esperet, Kang and Thomass\'e were motivated by the concept of {\em separation choosability}; see \cite{EKT} for more details.

    Part \eqref{itm:conj-triangle-free} is a quantitative version of a special case of Part \eqref{itm:conj-general}. Esperet, Kang and Thomass\'e \cite[Theorem 3.8]{EKT} showed that the second part, if true, is sharp up to a constant factor, by considering an appropriate binomial random graph. On the other hand, they mentioned that proving the existence of an induced bipartite subgraph with minimum degree at least $3$ seems difficult even in the case of graphs with large minimum degree and large girth (see~\cite[Conjecture 1.5]{EKT}). As our main result, we prove a slight weakening of Part \eqref{itm:conj-triangle-free} of \cref{conj:log} and settle Part \eqref{itm:conj-general} in a strong form.

    \begin{thm} \label{thm:log}
        For every graph $H$, there is a constant $c_H>0$ such that every $H$-free graph with minimum degree at least $d$ contains an induced bipartite subgraph of minimum degree at least $c_H \log d/\log \log d$.
    \end{thm}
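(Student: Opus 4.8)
The plan is to first reduce to the case where $G$ is $K_r$-free for $r := |V(H)|$ (possible since $H \subseteq K_r$), so that it suffices to show: every $K_r$-free graph with minimum degree at least $d$ has an induced bipartite subgraph of minimum degree at least $c_r \log d/\log\log d$. The advantage of $K_r$-freeness is that every neighborhood $N(v)$ induces a $K_{r-1}$-free graph, so by Ramsey's theorem any $m$-vertex subset of a neighborhood contains an independent set of size $\ge c_r m^{1/(r-1)}$. Hence, fixing a vertex $v$ (say with $\deg(v) = \delta(G)$), we can extract an independent set $A \subseteq N(v)$ with $|A| \ge d^{\Omega_r(1)}$; since $\log|A| = \Omega_r(\log d)$, passing to $A$ — and any later appeal to Ramsey inside a neighborhood — costs only a constant factor depending on $r$ in the target bound.

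The key reformulation is that if $A,B$ are disjoint independent sets then $G[A\cup B]$ is automatically bipartite with parts $A,B$; so we only need disjoint independent sets $A,B$ in which every vertex has $\ge k := c_r\log d/\log\log d$ neighbors on the other side. Take $A$ as above, and weight each $u \in V\setminus N[v]$ by $w(u) := |N(u)\cap A|$, so that $\sum_u w(u) = e\big(A, V\setminus N[v]\big) = \Omega(|A|\,d)$ (after mild pruning, each $a\in A$ has $\Omega(d)$ neighbors outside $N[v]$). I would then build $B$ as a large weighted independent set inside a suitable auxiliary subgraph of $G[V\setminus N[v]]$, chosen so that it is triangle-free — this is where the structure of $G$ is used a second time — or at least of controlled average degree $\bar d$, via the random-greedy process in a uniformly random vertex order. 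The analysis underlying Shearer-type independence bounds then gives that the expected $w$-weight of $B$ is $\gtrsim \big(\sum_u w(u)\big)\tfrac{\log\bar d}{\bar d}$; choosing the auxiliary graph so that $\bar d$ is not much larger than $d$ yields $e(A,B) = \Omega(|A|\log d)$ in expectation, hence a concrete choice of $B$ with $e(A,B) = \Omega(|A|\log d)$.

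It remains to upgrade average degree to minimum degree on both sides: a bipartite graph of average degree $\bar\delta$ contains an induced subgraph of minimum degree $\ge\bar\delta/2$, so iteratively deleting vertices of small cross-degree from $G[A\cup B]$ does the job — provided one first controls $|B|$ (which the greedy process does not bound by itself) and, more broadly, the irregularity of $G$. I would handle this by restricting to a dyadic band of degrees and running the whole construction at a scale where degrees are comparable; it is precisely this bookkeeping — together with a union bound over the $\Theta(\log d)$ scales — that I expect to cost a factor of $\log\log d$, which is why one proves $\log d/\log\log d$ rather than the conjectured $\log d$ (the latter should be attainable in the dense regime, where all degrees are automatically $\Theta(n)$).

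The hard part is enforcing all four conditions — $A$ independent, $B$ independent, every $a\in A$ with $\ge k$ neighbors in $B$, every $b\in B$ with $\ge k$ neighbors in $A$ — simultaneously and with no a priori upper bound on degrees: each condition alone is routine, but the Shearer-type mechanism that makes $B$ large and heavy is sensitive to the degree profile of the auxiliary graph, so $v$, $A$, and that graph must be chosen in concert. For $r>3$ there is the additional task of organizing a dichotomy — either recurse into a neighborhood (genuine progress, since it is $K_{r-1}$-free) or exploit local sparsity to cut down the effective clique number — arranged so that the final bound degrades by at most a constant depending on $r$.
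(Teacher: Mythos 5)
The core of your plan fails already for triangle-free graphs of girth at least $5$, i.e.\ in the base case $r=3$ of your own induction. You fix a single vertex $v$, take the independent set $A$ inside $N(v)$, and then seek an independent $B$ outside $N[v]$ such that \emph{both} sides of $G[A,B]$ have degree at least $k=\Omega(\log d/\log\log d)$. But if $G$ has girth at least $5$, any vertex $u\neq v$ adjacent to two distinct $a,a'\in N(v)$ would close the $4$-cycle $vaua'$; hence every vertex other than $v$ has at most one neighbour in $N(v)\supseteq A$. So for this choice of $A$ no set $B$ whatsoever yields minimum degree even $2$ on the $B$-side, and no Shearer-type weighting, pruning, or dyadic bookkeeping can repair a construction in which $A$ lives inside one neighbourhood. (Graphs of minimum degree $d$ and girth $\ge 5$ exist in abundance; this is exactly the ``large girth'' obstruction mentioned in the introduction, where even minimum degree $3$ was considered hard.) The same example shows your intermediate goal is unattainable as stated: one can have $e(A,B)=\Omega(|A|\log d)$ with every vertex of $B$ having weight $1$, and the iterative deletion then destroys everything -- a failure mode you partly acknowledge when you note that $|B|$ is uncontrolled. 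A secondary gap: for $r>3$ your claim that each $a\in A$ has $\Omega(d)$ neighbours outside $N[v]$ ``after mild pruning'' is unjustified (neighbours of $v$ may send almost all their edges back into $N(v)$), and the step ``choose the auxiliary graph so that it is triangle-free'' is precisely the hard content of the reduction, not a choice one can simply make.

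The paper's proof is global precisely to avoid this. After passing to a minimal, hence $d$-degenerate, subgraph, it takes $X$ to be a random subset of the \emph{whole} vertex set of density $p=1/d$, and lets $Y$ consist of vertices having at least $\ell=\lfloor \log d/\log\log d\rfloor$ neighbours in $X$, each accepted with a calibrated probability $p_y$ chosen so that every vertex lies in $Y$ with probability exactly $p$ regardless of its degree (this replaces your dyadic-band regularisation). A first-moment computation -- using triangle-freeness to make the relevant neighbourhood events independent, and degeneracy to bound $e(G)$ and the local density of $X$ -- produces an outcome in which every vertex of $Y$ has $\ge\ell$ neighbours in $X$, while $|X|=O(|Y|)$, $e(Y)=O(|Y|)$ and few edges meet the locally dense part of $X$; a $4$-colouring of the $3$-degenerate part of $X$ together with Tur\'an's theorem inside $Y$ then extracts the bipartite pair of minimum degree $\Omega(\ell)$. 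The value of $\ell$ is forced by the requirement $\Pr[\Bin(d,1/d)\ge\ell]\ge 1/d$, which is the true source of the $\log\log d$ loss (not a union bound over scales); your Shearer/fractional-colouring idea does work when degrees are polynomially large in $n$, as in \cref{sec:poly-large}, but not in the general sparse irregular regime. Your reduction to $K_r$-free graphs and the neighbourhood dichotomy for $r>3$ do match the strategy of \cref{sec:reduction} (dense neighbourhood $\Rightarrow$ recurse; otherwise randomly sparsify and delete a vertex from each triangle), but the triangle-free core of your argument, as proposed, does not work.
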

    
    We remark that this result is essentially tight: if $H$ is a graph that is not a forest (note that the statement is vacuous when $H$ is a forest, because the minimum degree condition forces the appearance of any forest on $d$ vertices), one can show using random graphs that there is an $H$-free graph with minimum degree $d$ which does not contain an induced bipartite subgraph with minimum degree at least $c_H \log d$ for some constant $c_H$ and for every large enough $d$. We show how to obtain such a construction for triangle-free graphs (see \cref{sec:probabilistic}), but it is not hard to obtain a construction in general.
    We remark also that for triangle-free graphs which are reasonably dense (i.e.\ $d=n^{\Omega(1)}$), one can get rid of the $\log \log d$ factor in \cref{thm:log} (see \cref{sec:poly-large} for details). We first prove \cref{thm:log} for triangle-free graphs in \cref{sec:log}. We then prove a reduction to the triangle-free case in \cref{sec:reduction}.  Although we make no effort to optimise the constant $c_H$, we remark that our proof gives a bound of the form $c_H \ge \varepsilon^{|H|}$ for some constant $\varepsilon>0$.

    Next, we turn our attention to the more refined problem of finding induced bipartite subgraphs with large minimum degree in terms of both the number of vertices and the minimum degree of a host triangle-free graph. Let $g(n,d)$ be the maximum $g$ such that every triangle-free graph with $n$ vertices and minimum degree at least $d$ contains an induced bipartite subgraph with minimum degree at least $g$. We observe an interesting `phase transition' behaviour of this function.

    \begin{thm}\label{thm:refined-degree}
        With $g(n,d)$ as defined above,
        \begin{enumerate}[\rm (i)]
            \item \label{itm:medium-deg}
                for $d \le \sqrt n$ we have $\Omega(\log d/\log \log d)\le g(n,d)\le O\left(\log d\right)$;
            \item \label{itm:large-deg}
                for $\sqrt n < d \le n/2$ we have $d^2/(2n)\le g(n,d)\le O((d^2\log d)/n)$.
            \newcounter{temp}
            \setcounter{temp}{\value{enumi}}
        \end{enumerate}
		Furthermore,
        \begin{enumerate}[\rm (i)]
            \setcounter{enumi}{\value{temp}}
            \item \label{itm:sharp-medium-deg}
                for $n^{\Omega(1)}\le d \le \sqrt n$ we have $g(n,d)=\Theta\left(\log d\right)$;
            \item \label{itm:shart-large-deg}
                for $n^{2/3} < d \le n/2$ we have $g(n,d)=\Theta(d^2/n)$.
        \end{enumerate}
    \end{thm}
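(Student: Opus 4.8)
The four parts split into lower and upper bounds, and two of the lower bounds are already in hand: \cref{thm:log} applied to triangle-free graphs gives $g(n,d)=\Omega(\log d/\log\log d)$ for every $d$ (the lower bound in part~(i)), and the refinement for reasonably dense triangle-free graphs established in \cref{sec:poly-large} gives $g(n,d)=\Omega(\log d)$ once $d\ge n^{\Omega(1)}$ (the lower bound in part~(iii)). So the genuinely new content is (a) the bound $g(n,d)\ge d^2/(2n)$ appearing in parts~(ii) and~(iv), and (b) the three upper bounds; the two ``$\Theta$'' statements then follow by matching. Throughout, write $g(G)$ for the largest minimum degree of an induced bipartite subgraph of $G$, so that $g(n,d)=\min\{g(G):G\text{ triangle-free},\ |V(G)|=n,\ \delta(G)\ge d\}$, and an upper bound on $g(n,d)$ amounts to exhibiting a single good construction.

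\emph{Lower bound $g(n,d)\ge d^2/(2n)$.} Let $G$ be triangle-free on $n$ vertices with $\delta(G)\ge d$. Since $\sum_v\big(\sum_{u\in N(v)}\deg u\big)=\sum_w\deg(w)^2=\sum_v\deg(v)^2$, some vertex $v$ satisfies $\sum_{u\in N(v)}\deg u\le\deg(v)^2$. Fix it, set $A=N(v)$, and for $w\in N(v)$ set $B_w=N(w)\setminus\{v\}$. Triangle-freeness makes $A$ and every $B_w$ independent and forces $A\cap B_w=N(v)\cap N(w)=\varnothing$ (a common neighbour of the adjacent pair $v,w$ would close a triangle), so $G[A\cup B_w]$ is bipartite with all its edges between $A$ and $B_w$. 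Counting the triples $(w,a,b)$ with $w,a\in N(v)$, $b\in N(w)\setminus\{v\}$ and $ab\in E(G)$ in two ways, then applying Cauchy--Schwarz and $\delta(G)\ge d$, gives
\[
\sum_{w\in N(v)}e_G(A,B_w)\;=\;\sum_{b\neq v}\mathrm{codeg}(v,b)^2\;\ge\;\frac{\big(\sum_{a\in N(v)}(\deg a-1)\big)^2}{n}\;\ge\;\frac{\deg(v)^2(d-1)^2}{n},
\]
while $\sum_{w\in N(v)}\big(|A|+|B_w|\big)\le\deg(v)^2+\sum_{u\in N(v)}\deg u\le2\deg(v)^2$ by the choice of $v$. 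Hence some $w$ has $e_G(A,B_w)\ge\tfrac{(d-1)^2}{2n}\big(|A|+|B_w|\big)$; repeatedly deleting low-degree vertices from the bipartite graph $G[A\cup B_w]$ keeps it bipartite and cannot exhaust it, leaving an induced bipartite subgraph of minimum degree $\Omega(d^2/n)$ (a little care with the lower-order terms recovers the constant $\tfrac12$).

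\emph{Upper bounds.} The key tool is a blow-up reduction: for triangle-free $\Gamma$, the $s$-fold blow-up $\Gamma[s]$ (each vertex replaced by an independent ``blob'' of $s$ vertices) is triangle-free and satisfies $g(\Gamma[s])\le s\cdot g(\Gamma)$. Indeed, if $F\subseteq\Gamma[s]$ is induced bipartite with parts $X,Y$ and $\delta(F)\ge1$, then no blob can meet both $X$ and $Y$ --- if $x\in B\cap X$ and $y\in B\cap Y$ lie in one blob $B$ and a neighbouring blob $B'$ meets $V(F)$, any vertex of $B'\cap V(F)$ is $F$-adjacent to both $x$ and $y$ --- so the blobs meeting $V(F)$ induce a bipartite subgraph of $\Gamma$ in which a blob hit by $F$ has degree $\ge\delta(F)/s$, whence $g(\Gamma)\ge\delta(F)/s$. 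We feed this two base constructions: (1) for every $n$ and every $d\le\sqrt n$, \cref{sec:probabilistic} produces a triangle-free graph on $n$ vertices with minimum degree $\ge d$ and $g(\cdot)=O(\log d)$; (2) for suitable parameters there is a triangle-free pseudorandom $(m,\delta,\lambda)$-graph with $\delta=\Theta(m^{2/3})$, $\lambda=\Theta(m^{1/3})$ (a classical construction of Alon), for which the expander mixing lemma bounds the order of every independent set --- hence of every induced bipartite subgraph, which contains an independent set of half its order --- by $O(\lambda m/\delta)$, forcing average (so minimum) degree $O(\lambda)$, i.e.\ $g(\cdot)=O(m^{1/3})$. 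Now construction~(1) used directly is the upper bound in parts~(i) and~(iii). For $\sqrt n<d\le cn$ (a small constant $c$), blow up the graph from~(1) on $\Theta\big((n/d)^2\big)$ vertices with minimum degree $\Theta(n/d)$ by a factor $s=\Theta(d^2/n)\ge1$: the result has $n$ vertices, minimum degree $\ge d$, and $g\le s\cdot O(\log(n/d))=O\big((d^2/n)\log d\big)$ since $d>\sqrt n$ makes $\log(n/d)=O(\log d)$ --- the upper bound in part~(ii). For $n^{2/3}<d\le cn$, blow up the pseudorandom graph from~(2) on $\Theta\big((n/d)^3\big)$ vertices by $s=\Theta(d^3/n^2)\ge1$: this has $n$ vertices, minimum degree $\ge d$, and $g\le s\cdot O(n/d)=O(d^2/n)$. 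Finally, for $d>cn$ (up to $n/2$) the complete bipartite graph $K_{d,n-d}$ is triangle-free with minimum degree $d$ and $g(K_{d,n-d})=d=O(d^2/n)$ (and, by the Andr\'asfai--Erd\H os--S\'os theorem, for $d>2n/5$ this value is exact). Matching these with the lower bounds completes parts~(iii) and~(iv).

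\emph{Main obstacle.} The heaviest ingredient is the dense upper bound: showing that a triangle-free pseudorandom graph --- and then its blow-up --- has no induced bipartite subgraph of large minimum degree, and checking that suitable base graphs exist at every density with $d>n^{2/3}$. This threshold is precisely where the blow-up factor $d^3/n^2$ first drops below $1$ (equivalently, where a pseudorandom triangle-free graph of the required density would already need more than $n$ vertices), and it is the source of the phase transition. The remaining points --- covering all values of $m$ in the pseudorandom construction, using unequal blobs to land on exactly $n$ vertices, treating the narrow range of $d$ near $n/2$, and pinning down the constant $\tfrac12$ in the $d^2/(2n)$ lower bound --- are routine.
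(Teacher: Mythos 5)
Your proposal is correct, and for most of the theorem it follows the same route as the paper: the lower bounds in (i) and (iii) are quoted from \cref{thm:log} (triangle-free case) and from the Johansson-based argument of \cref{sec:poly-large}, exactly as the paper does; the upper bounds in (i)--(iii) come from the Krivelevich-style probabilistic construction of \cref{sec:probabilistic} together with disjoint unions and blow-ups; and the upper bound in (iv) comes from Alon's triangle-free $(n,d,\lambda)$-graphs plus the expander mixing lemma plus blow-ups, which is precisely \cref{sec:explicit}. Your explicit monotonicity lemma $g(\Gamma[s])\le s\,g(\Gamma)$ (with the observation that, once $\delta(F)\ge 1$, no blob can meet both sides of an induced bipartite $F$) is a clean formalization of the blow-up step that the paper carries out more informally, and your handling of $d=\Theta(n)$ via $K_{d,n-d}$ and of the threshold $d>n^{2/3}$ (where the blow-up factor reaches $1$) matches the paper's case analysis.

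The one genuinely different ingredient is your proof of the dense lower bound. The paper gives two arguments in \cref{sec:dense}: a probabilistic one (pick a uniformly random pair $\{x_1,x_2\}$, let $Y=\{v:A_v\cap\{x_1,x_2\}\ne\emptyset\}$, and compute $\E[e(Y)-\tfrac{d^2}{2n}|Y|]>0$), which yields exactly $d^2/(2n)$, and a $4$-cycle-counting argument over all edges via Jensen, which yields $d^2/(4n)$ for $d>2\sqrt n$. Your argument is deterministic and local: choose $v$ with $\sum_{u\in N(v)}\deg u\le \deg(v)^2$, consider the bipartite graphs induced on $N(v)\cup(N(w)\setminus\{v\})$ for $w\in N(v)$, and apply Cauchy--Schwarz to the codegrees $|N(v)\cap N(b)|$; this is close in spirit to the paper's second proof but avoids both randomness and the global $4$-cycle count, and it works for all $d$. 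The only caveat is quantitative: as written you obtain average degree at least $(d-1)^2/n$, hence minimum degree at least $(d-1)^2/(2n)$ rather than the stated $d^2/(2n)$; the loss of the $-1$ (from excluding $b=v$) looks intrinsic to your counting, so ``recovering the constant $\tfrac12$'' is not automatic. This is immaterial for every $\Theta$-statement in the theorem, and the paper's first proof shows the exact constant if one insists on it, but you should either prove the literal bound or state yours as $(d-1)^2/(2n)$.
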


    Part \eqref{itm:large-deg} of \cref{thm:refined-degree} was also proved independently by Cames van Batenburg, de Joannis de Verclos, Kang and Pirot~\cite[Theorem 1.2]{BVKP}, though our proofs of both the lower and upper bounds are 
    different to theirs. We prove all the lower bounds for \cref{thm:refined-degree} in \cref{sec:lower}, and we prove the upper bounds in \cref{sec:upper}.

    Finally, let us return to the work of Erd\H os, Faudree, Pach and Spencer~\cite{EFPS}, and make some remarks. These authors defined $f(n,m)$ to be the maximum $f$ such that every triangle-free graph with $n$ vertices and at least $m$ edges contains an induced bipartite subgraph with at least $f$ edges, and they studied the approximate behaviour of $f(n,m)$ as $n$ and $m$ vary. They showed $\Omega(m^{1/3})\le f(n,m)\le O(m^{1/3}\log^2m)$ for $m \le n^{3/2}$, and $\Omega(m^3n^{-4})\le f(n,m)\le O(m^3n^{-4}\log^2(n^2/m))$ if $m\ge n^{3/2}$. By leveraging some results concerning discrepancy and chromatic number of triangle-free graphs, we can find the correct order of magnitude of $f(n,m)$ for all $n,m$.

    \begin{prop}\label{thm:ManyEdges}
        With $f(n,m)$ as defined above,
        \begin{enumerate}[\rm (i)]
            \item \label{itm:many-edges-sparse}
                for $m \le n^{3/2}\sqrt{\log n}$ we have $f(n,m)=\Theta\left(m^{1/3}\log^{4/3}m\right)$;
            \item \label{itm:many-edges-dense}
                for $m \ge n^{3/2}\sqrt{\log n}$ we have $f(n,m)=\Theta(m^3/n^4)$.
        \end{enumerate}
    \end{prop}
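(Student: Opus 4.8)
I would prove the matching lower and upper bounds separately; the phase transition at $m\asymp n^{3/2}\sqrt{\log n}$ reflects whether an extremal configuration can be fitted onto $o(n)$ vertices (possible exactly when $m\lesssim n^{3/2}\sqrt{\log n}$) or must be spread over all of $V(G)$. For the lower bound when $m\le n^{3/2}\sqrt{\log n}$ I would exploit the chromatic number of triangle-free graphs. Pass to a subgraph $H\subseteq G$ realising the maximum subgraph density $\rho:=\max_{H'\subseteq G}e(H')/|V(H')|$; then $H$ is triangle-free with minimum degree at least $\rho$ (so $e(H)=\rho|V(H)|$ and $|V(H)|>\rho$). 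Colouring $H$ with $O\bigl(\sqrt{|V(H)|/\log|V(H)|}\bigr)$ colours — which follows from the Ramsey bound $R(3,t)=O(t^2/\log t)$ applied to a largest independent set and iterated — and keeping the two colour classes spanning the most edges produces an induced bipartite subgraph with $\Omega\bigl(e(H)\log|V(H)|/|V(H)|\bigr)=\Omega(\rho\log\rho)$ edges. Alternatively, the bound of Johansson (in the sharper form of Molloy, phrased via the maximum average degree) gives $\chi(G)=O(\rho/\log\rho)$, and the densest pair of colour classes of $G$ yields an induced bipartite subgraph with $\Omega\bigl(m(\log\rho)^2/\rho^2\bigr)$ edges. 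These two estimates cross at $\rho\asymp(m\log m)^{1/3}$, where both equal $\Omega(m^{1/3}\log^{4/3}m)$; taking whichever is larger for the actual value of $\rho$ (and using $\chi(G)=O(1)$, hence $f(G)=\Omega(m)$, when $\rho=O(1)$) gives the desired lower bound.

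For the lower bound when $m\ge n^{3/2}\sqrt{\log n}$ a short spectral argument suffices. Let $A$ be the adjacency matrix of $G$, with eigenvalues $\lambda_1\ge\dots\ge\lambda_n$; since $\lambda_1\ge\mathbf{1}^\top A\mathbf{1}/n=2m/n$ we get $\operatorname{tr}(A^4)=\sum_i\lambda_i^4\ge(2m/n)^4$. On the other hand, for every edge $uv$ of the triangle-free graph $G$ the sets $N(u)$ and $N(v)$ are independent and disjoint (a common neighbour would form a triangle with $u$ and $v$), and the number of walks of length $3$ from $u$ to $v$ equals $e(N(u),N(v))$; summing over ordered adjacent pairs gives $\operatorname{tr}(A^4)=2\sum_{uv\in E(G)}e(N(u),N(v))$. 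Hence some edge $uv$ satisfies $e(N(u),N(v))\ge 8m^3/n^4$, and $G[N(u)\cup N(v)]$ is an induced bipartite subgraph with at least that many edges.

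The upper bounds come from constructions, which is where discrepancy enters. The base object is a ``maximally dense'' pseudorandom triangle-free graph $G_0$ on $N_0$ vertices — say the triangle-free process graph — which is $d_0=\Theta(\sqrt{N_0\log N_0})$-regular, has independence number $\alpha(G_0)=O(\sqrt{N_0\log N_0})=O(d_0)$, and is pseudorandom enough that $e_{G_0}(X,Y)=O\bigl((d_0/N_0)|X||Y|\bigr)$ for all disjoint $X,Y$. Since an induced bipartite subgraph is two disjoint independent sets together with the edges between them, $f(G_0)=O\bigl((d_0/N_0)\alpha(G_0)^2\bigr)=O(d_0^3/N_0)$. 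For $m\le n^{3/2}\sqrt{\log n}$ I run the process on $N_0=\Theta(m^{2/3}/\log^{1/3}m)\le n$ vertices until it has $m$ edges, and pad with isolated vertices, giving $f(n,m)\le f(G_0)=O(m^{1/3}\log^{4/3}m)$. For $m\ge n^{3/2}\sqrt{\log n}$ I instead take a balanced blow-up $G_0[t]$ with $N_0t=n$ and $N_0$ chosen so that $e(G_0[t])=t^2e(G_0)=m$; a blow-up of a triangle-free graph is triangle-free, every induced bipartite subgraph of $G_0[t]$ projects to a pair of independent sets of $G_0$, and the pseudorandomness then gives $f(G_0[t])=O\bigl(t^2\cdot(d_0/N_0)\alpha(G_0)^2\bigr)=O(t^2 d_0^3/N_0)=O(m^3/n^4)$; as $N_0$ ranges down from $n$ this covers all $m$ up to $\Theta(n^2/\mathrm{polylog}\,n)$, and for $m=\Theta(n^2)$ the trivial bound $f(n,m)\le m$ is already $O(m^3/n^4)$.

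The main obstacle is getting the polylogarithmic powers exactly right. On the lower-bound side this is what forces the use of the sharp (Molloy-type) bound on the chromatic number of triangle-free graphs in a degeneracy / maximum-average-degree form, together with a careful treatment of the intermediate window of densities $\rho$ in which neither chromatic-number estimate alone suffices. On the upper-bound side it rests on the fine pseudorandomness of the triangle-free process graph — its near-optimal independence number and Chernoff-quality control of edge densities between arbitrary vertex subsets — which is precisely what removes the spurious $\log^2$ factors present in the earlier bounds of Erd\H os, Faudree, Pach and Spencer.
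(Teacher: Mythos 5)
Your sparse-regime lower bound rests on a step that is not available: you invoke ``Johansson in the sharper form of Molloy, phrased via the maximum average degree'' to claim $\chi(G)=O(\rho/\log\rho)$ for a triangle-free graph whose maximum subgraph density (equivalently, degeneracy) is $\rho$. No such theorem exists. Johansson's and Molloy's results require bounded \emph{maximum} degree; for $d$-degenerate triangle-free graphs the bound $\chi=O(d/\log d)$ is in fact false (see \cite{AKS1} and the discussion in \cref{sec:conclusion}), and even its fractional relaxation is an open conjecture of Harris \cite{Ha} which would already imply \cref{conj:log}. Your other estimate --- applying $\chi=O(\sqrt{N/\log N})$, via $R(3,t)=O(t^{2}/\log t)$, to the densest subgraph --- only gives $\Omega(\rho\log\rho)$ induced bipartite edges, which falls short of $m^{1/3}\log^{4/3}m$ as soon as $\rho\ll (m\log m)^{1/3}$; so precisely the window of intermediate densities that you flag as delicate is left uncovered. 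The repair is to use the known edge-based bound $\chi(G)=O\left(m^{1/3}/\log^{2/3}m\right)$ for triangle-free graphs with $m$ edges (Gimbel--Thomassen, Nilli, Poljak--Tuza; \cref{thm:chromatic}) together with the observation that the densest pair of colour classes spans at least $e(G)/\binom{\chi(G)}{2}$ edges (\cref{lem:EFPS}); this is exactly how the paper obtains the lower bound in part \eqref{itm:many-edges-sparse} in one line.

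The remainder of your plan is essentially sound. Your spectral argument for the dense-regime lower bound is correct and is a nice self-contained alternative to the paper's citation of \cite{EFPS,EJLS}: indeed $\operatorname{tr}(A^{4})\ge\lambda_{1}^{4}\ge(2m/n)^{4}$, and for triangle-free $G$ one has $\operatorname{tr}(A^{4})=2\sum_{uv\in E(G)}e(N(u),N(v))$, so some edge $uv$ gives an induced bipartite subgraph $G[N(u)\cup N(v)]$ with at least $8m^{3}/n^{4}$ edges. Your upper-bound constructions coincide with the paper's (the Guo--Warnke triangle-free-process graph of \cref{thm:GuoWarnke} and its blowups, as in \cref{cor:GW-blowup}), but two details need smoothing: the discrepancy property cannot hold for \emph{all} disjoint $X,Y$ (it fails for tiny sets; Guo--Warnke state it for sets of size $\Theta(\sqrt{N_{0}\log N_{0}})$, which suffices after enlarging the two parts), and stopping the process when it has exactly $m$ edges is not justified, since a truncated process need not retain the stated independence-number and discrepancy properties --- instead take the least $N_{0}$ for which the completed graph has at least $m$ edges and pad with isolated vertices, as the paper does.
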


	\cref{thm:ManyEdges} resolves a problem raised in a paper of Erd\H os, Janson, {\L}uczak and Spencer~\cite{EJLS}. We present its short proof in \cref{sec:many-edges}.

    \paragraph{Notation.}%
        We use standard asymptotic notation throughout the paper. For functions $f=f\left(n\right)$ and $g=g\left(n\right)$ we write $f=O\left(g\right)$ to mean there is a constant $c$ such that $\left|f\right|\le c\left|g\right|$, we write $f=\Omega\left(g\right)$ to mean there is a constant $c>0$ such that $f\ge c\left|g\right|$ for sufficiently large $n$, we write $f=\Theta\left(g\right)$ to mean that $f=O\left(g\right)$ and $f=\Omega\left(g\right)$, and we write $f=o\left(g\right)$ or $g=\omega\left(f\right)$ to mean that $f/g\to0$ as $n\to\infty$. All asymptotics are as $n\to\infty$ unless specified otherwise (specifically, notation of the form $o_{k}\left(1\right)$ indicates that asymptotics are as $k\to\infty$). Also, we write `w.h.p.'\ (standing for `with high probability') to indicate that an event holds with probability $1-o(1)$.

\section{Induced bipartite subgraphs with many edges}\label{sec:many-edges}
    In this section we will prove \cref{thm:ManyEdges}. The lower bound in \eqref{itm:many-edges-dense} was already proved in~\cite{EFPS,EJLS}. For the lower bound in \eqref{itm:many-edges-sparse}, we recall the following easy observation, which appears in the proof of~\cite[Theorem~4]{EFPS}.

    \begin{lem}\label{lem:EFPS}
        Any graph $G$ has an induced bipartite subgraph with at least $e(G)/\binom{\chi(G)}{2}$ edges.
    \end{lem}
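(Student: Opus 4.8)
The plan is to exploit a proper colouring of $G$ with $\chi(G)$ colours together with a simple averaging argument over pairs of colour classes. Write $\chi = \chi(G)$ and fix a proper colouring of $G$ with colour classes $V_1, \dots, V_\chi$, so that each $V_i$ is an independent set. For each pair $1 \le i < j \le \chi$, consider the vertex-induced subgraph $G[V_i \cup V_j]$: since $V_i$ and $V_j$ are both independent, this graph is bipartite with parts $V_i$ and $V_j$, and it is by definition an induced subgraph of $G$. Thus each of the $\binom{\chi}{2}$ graphs $G[V_i \cup V_j]$ is an induced bipartite subgraph of $G$, and it remains only to show that one of them is large.

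The counting step is immediate: every edge $uv$ of $G$ joins two vertices in distinct colour classes (there are no monochromatic edges in a proper colouring), say $u \in V_i$ and $v \in V_j$ with $i \ne j$, and hence contributes to exactly one of the graphs $G[V_i \cup V_j]$. Therefore
\[
    \sum_{1 \le i < j \le \chi} e\bigl(G[V_i \cup V_j]\bigr) = e(G).
\]
Since this is a sum of $\binom{\chi}{2}$ nonnegative terms, by pigeonhole there is a pair $i < j$ with $e\bigl(G[V_i \cup V_j]\bigr) \ge e(G)/\binom{\chi}{2}$, and the corresponding induced bipartite subgraph witnesses the claimed bound.

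There is essentially no real obstacle in this argument; it is a one-line averaging once the colouring is in place. The only points that warrant a moment's care are that the induced subgraph on the union of two independent sets is genuinely bipartite (which is clear, as all its edges go between the two parts) and that the edge counts over the $\binom{\chi}{2}$ pairs genuinely partition $e(G)$ rather than overcount it, which again is guaranteed by the colouring being proper.
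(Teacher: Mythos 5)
Your proof is correct and is exactly the standard averaging argument over pairs of colour classes that the paper has in mind (the paper itself omits the proof and cites it from Erd\H{o}s--Faudree--Pach--Spencer, where it is proved in the same way). Nothing is missing.
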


    The desired lower bound in \eqref{itm:many-edges-sparse} is then an immediate consequence of the following bound on the chromatic number of triangle-free graphs, which seems to have appeared independently in~\cite{GimTho, Nil00, PolTuz}.

    \begin{thm} \label{thm:chromatic}
        Let $G$ be a triangle-free graph with $m$ edges. Then $\chi(G)=O\left(\frac{m^{1/3}}{\log^{2/3} m}\right)$.
    \end{thm}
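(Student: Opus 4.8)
The plan is to reduce this edge-count bound to the analogous vertex-count bound by a degree-splitting argument, handling the low-degree part directly via Johansson's theorem. I would use two known inputs. The first is \emph{Johansson's theorem}: every triangle-free graph of maximum degree $\Delta$ has chromatic number $O(\Delta/\log\Delta)$. The second is the classical fact that every triangle-free graph on $N$ vertices has chromatic number $O\!\left(\sqrt{N/\log N}\right)$; this is best possible, and it can be deduced from Johansson's theorem by a short induction on $N$ (repeatedly delete the neighbourhood of a vertex whose degree exceeds $\sqrt{N\log N}$, which is an independent set, until the maximum degree falls below $\sqrt{N\log N}$, and then apply Johansson's theorem), or alternatively from the Ajtai--Koml\'os--Szemer\'edi (or Shearer) bound $\alpha(G)=\Omega\!\left(\sqrt{N\log N}\right)$ by iteratively removing maximum independent sets. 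We may assume throughout that $m$ exceeds any prescribed absolute constant, since otherwise $G$ has at most $2m=O(1)$ non-isolated vertices and the bound is trivial.

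For the main step, set $T:=\lceil m^{1/3}\log^{1/3}m\rceil$ and split $V(G)$ according to degree: let $A=\{v:\deg_G(v)\ge T\}$ and $B=V(G)\setminus A$. Since $\sum_v\deg_G(v)=2m$ we have $|A|\le 2m/T=O\!\left(m^{2/3}/\log^{1/3}m\right)$, and since $\log|A|=\Theta(\log m)$, applying the second input to the triangle-free graph $G[A]$ gives $\chi(G[A])=O\!\left(\sqrt{|A|/\log|A|}\right)=O\!\left(\sqrt{(m^{2/3}/\log^{1/3}m)/\log m}\,\right)=O\!\left(m^{1/3}/\log^{2/3}m\right)$. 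Meanwhile $G[B]$ is triangle-free of maximum degree at most $T-1$, and $\log T=\Theta(\log m)$, so Johansson's theorem gives $\chi(G[B])=O(T/\log T)=O\!\left(m^{1/3}\log^{1/3}m/\log m\right)=O\!\left(m^{1/3}/\log^{2/3}m\right)$. Colouring $G[A]$ and $G[B]$ with disjoint palettes yields a proper colouring of $G$, whence $\chi(G)\le\chi(G[A])+\chi(G[B])=O\!\left(m^{1/3}/\log^{2/3}m\right)$, as required.

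The single real design choice is the threshold $T$: it is picked so that the \enquote{few high-degree vertices} term $\sqrt{(m/T)/\log(m/T)}$ and the \enquote{bounded maximum degree} term $T/\log T$ balance, each of order $m^{1/3}/\log^{2/3}m$; the remaining estimates are bookkeeping. Consequently I do not expect a genuine obstacle in this proof: its mathematical weight rests entirely on Johansson's theorem (and, for the vertex-count bound, on the Ajtai--Koml\'os--Szemer\'edi/Shearer independence bound), both of which I would cite as black boxes, and any real difficulty lives there rather than in the reduction itself.
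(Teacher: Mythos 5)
Your proof is correct: the degree-splitting at threshold $T=\lceil m^{1/3}\log^{1/3}m\rceil$, with Johansson's theorem on the low-degree part and the $O\!\left(\sqrt{N/\log N}\right)$ vertex bound on the $O\!\left(m^{2/3}/\log^{1/3}m\right)$ high-degree vertices, does give $\chi(G)=O\!\left(m^{1/3}/\log^{2/3}m\right)$. Note, however, that the paper does not prove \cref{thm:chromatic} at all --- it invokes it as a known result, citing \cite{GimTho,Nil00,PolTuz} --- so there is no in-paper argument to compare against; your argument is essentially the standard proof found in those references, resting on the same two black boxes (Johansson, and the vertex-count bound which itself comes from Johansson or from the Ajtai--Koml\'os--Szemer\'edi/Shearer independence bound). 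One small piece of bookkeeping you should make explicit: the step ``$\log|A|=\Theta(\log m)$'' presupposes that $|A|$ is polynomially large in $m$; this is harmless, since if, say, $|A|\le m^{1/4}$ then trivially $\chi(G[A])\le|A|=o\!\left(m^{1/3}/\log^{2/3}m\right)$, but the case split should be stated. With that remark added, the proof is complete.
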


    It remains to prove the upper bounds in \eqref{itm:many-edges-sparse} and \eqref{itm:many-edges-dense}. For both of these we will take advantage of the following theorem proved by Guo and Warnke~\cite[Theorem 4]{GuoWar} resulting from analysis of a certain random triangle-free process.

    \begin{thm} \label{thm:GuoWarnke}
        There exist $n_{0},c,\beta>0$ such that for all $n\ge n_{0}$, there is an $n$-vertex triangle-free graph $G_{n}$ with the following property. For any pair of disjoint vertex-sets $A,B\subseteq V\left(G_{n}\right)$ with $\left|A\right|=\left|B\right|=\ceil{c\sqrt{n\log n}}$ we have
        \begin{equation}
            \beta \sqrt{\frac{\log n}{n}} \, \left|A\right|\left|B\right|
            \le e(G_{n}\left[A,B\right])
            \le 2\beta\sqrt{\frac{\log n}{n}} \, \left|A\right|\left|B\right|.\label{eq:disc}
        \end{equation}
    \end{thm}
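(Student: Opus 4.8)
The plan is to take $G_n$ to be the graph produced by the random triangle-free process --- start from the empty graph on $n$ vertices and repeatedly add a uniformly random edge among all those whose addition does not create a triangle --- stopped at the first moment it has $m$ edges, where $m$ is chosen so that $p:=2m/n^{2}$ equals $\tfrac{3}{2}\beta\sqrt{\log n/n}$ for a suitably small absolute constant $\beta>0$ (this happens well before the almost-sure termination time of the process, which is of the same order $n^{3/2}\sqrt{\log n}$). Triangle-freeness of $G_n$ is immediate from the construction, and $p\left|A\right|\left|B\right|$ is, up to constant factors, precisely the quantity appearing on both sides of \eqref{eq:disc}. So it suffices to prove a discrepancy statement: with high probability, every disjoint pair $A,B\subseteq V(G_n)$ with $\left|A\right|=\left|B\right|=\ell:=\ceil{c\sqrt{n\log n}}$ satisfies $e(G_n[A,B])=(1\pm\tfrac13)\,p\,\ell^{2}$. (One cannot simply take $G(n,p)$ here, since for this range of $p$ the binomial random graph has $\Theta(n^{3/2}(\log n)^{3/2})\to\infty$ triangles in expectation; and deleting an edge from each triangle afterwards is hard to control well enough to keep the lower bound in \eqref{eq:disc}. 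The triangle-free process is the natural way to get a triangle-free graph with $G(n,p)$-like pseudorandomness built in.)

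The outer layer is a union bound. For a single fixed disjoint pair $(A,B)$ I would establish the two-sided tail estimate
\[
\Pr\!\left[\,\left|e(G_n[A,B])-p\,\ell^{2}\right|>\tfrac13\,p\,\ell^{2}\,\right]\le \exp\!\bigl(-\delta\,p\,\ell^{2}\bigr)
\]
for some absolute $\delta>0$. Since $p\,\ell^{2}=\Theta\bigl(c^{2}\sqrt{n}\,(\log n)^{3/2}\bigr)$ while the number of pairs $(A,B)$ to consider is at most $\binom{n}{\ell}^{2}\le\exp(2\ell\log n)=\exp\bigl(O(c\sqrt{n}\,(\log n)^{3/2})\bigr)$, choosing the constant $c$ large enough relative to $\delta$ makes the tail bound beat the union bound, and hence for all sufficiently large $n$ a valid $G_n$ exists (in fact a random instance works w.h.p.). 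Note that $c$ enters the tail exponent quadratically but the union bound only linearly --- this asymmetry is exactly why the theorem is stated with $c$ an unspecified, sufficiently large constant, and it also means no effort is needed to optimise $\beta$ or $\delta$.

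The crux, and the step I expect to be the main obstacle, is the per-pair tail estimate. The edges revealed by the triangle-free process are highly dependent, so a Chernoff bound cannot be applied directly; instead one tracks $e(G_n[A,B])$ through the process. Conditioned on the history after $i$ steps, the probability that the next edge lies between $A$ and $B$ is, up to lower-order fluctuations, the fraction of non-edges between $A$ and $B$ that are still ``open'' (addable without creating a triangle), which the differential-equation analysis of the triangle-free process (Bohman; Fiz Pontiveros--Griffiths--Morris; Bohman--Keevash) pins down to within a $n^{-\Omega(1)}$ relative error. Feeding this into a Freedman/Azuma-type inequality for the associated Doob martingale, and using the self-correcting nature of the process so that the errors do not accumulate, yields concentration of $e(G_n[A,B])$ around its trajectory $p\,\ell^{2}$. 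The delicate part --- and where I expect the refined analysis of Guo and Warnke is needed over the earlier works --- is to drive the failure probability all the way down to $\exp(-\Omega(p\,\ell^{2}))$ \emph{uniformly over all pairs}, rather than the weaker $\exp(-n^{\Omega(1)})$ that suffices when one only needs to control boundedly many tracked statistics; this forces running the martingale argument at the right resolution and bounding the ``bad'' events (co-degree spikes, premature local saturation) on which the approximation could fail with that same strong probability. Once this estimate is available, the routine union bound of the previous paragraph finishes the proof.
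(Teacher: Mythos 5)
You should first be aware that the paper does not prove this statement at all: it is imported verbatim as \cite[Theorem 4]{GuoWar}, a result of Guo and Warnke obtained from their analysis of the random triangle-free process, and everything downstream (Lemma \ref{lem:GW-deduction}, Corollary \ref{cor:GW-blowup}) treats it as a black box. Your outline does correctly reverse-engineer the strategy behind the cited result: the triangle-free process is the right construction (and your observation that $G(n,p)$ at density $\Theta(\sqrt{\log n/n})$ has more triangles than edges, so that the deletion trick of Section \ref{sec:probabilistic} cannot work here, is exactly the reason a more sophisticated construction is needed); the arithmetic of the union bound is correct ($p\ell^2=\Theta(c^2\sqrt n(\log n)^{3/2})$ against $\exp(2c\sqrt n(\log n)^{3/2})$ pairs, so a large constant $c$ wins); and the $(1\pm\tfrac13)$ window around $\tfrac32\beta\sqrt{\log n/n}\,\ell^2$ reproduces the stated two-sided bound.

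However, as a proof the proposal has a genuine gap, and it is the one you yourself flag: the per-pair tail estimate $\Pr[|e(G_n[A,B])-p\ell^2|>\tfrac13 p\ell^2]\le\exp(-\delta p\ell^2)$ is asserted, not established. This is not a routine application of Freedman's inequality to the Doob martingale of the process. The one-step conditional probabilities depend on the evolving set of open pairs between $A$ and $B$, which must itself be tracked with relative error $o(1)$ uniformly over \emph{all} $\exp(\Theta(\sqrt n(\log n)^{3/2}))$ pairs simultaneously, and the standard differential-equation-method analyses of the process (Bohman, Fiz Pontiveros--Griffiths--Morris, Bohman--Keevash) only control a bounded or polynomial number of statistics with failure probability $\exp(-n^{\Omega(1)})$, which is far too weak to survive this union bound. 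Pushing the error probability down to $\exp(-\Omega(p\ell^2))$ per pair, including on the ``bad'' events where the trajectory approximation breaks, is precisely the technical content of Guo and Warnke's paper and occupies most of it. So the submission is a correct and well-informed road map to the known proof, but the central estimate on which everything rests is left unproved; without it (or an explicit citation to where it is proved), the argument is incomplete.
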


    The reason \cref{thm:GuoWarnke} is useful for us is as follows.
    \begin{lem} \label{lem:GW-deduction}
        Consider a graph $G_{n}$ as in \cref{thm:GuoWarnke}. Then
        \begin{enumerate} [\rm (i)]
            \item \label{itm:GW-edges}
                $e\left(G_{n}\right)=\Theta\left(n^{3/2}\sqrt{\log n}\right)$, and;
            \item \label{itm:GW-bip}
                every induced bipartite subgraph in $G_{n}$ has $O(\sqrt{n \log n})$ vertices and  $O\left(\sqrt{n}\log^{3/2}n\right)$ edges.
        \end{enumerate}
    \end{lem}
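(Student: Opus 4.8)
The plan is to derive both parts of \cref{lem:GW-deduction} from \cref{thm:GuoWarnke} by elementary covering arguments. Write $k:=\ceil{c\sqrt{n\log n}}$ for the set size appearing in that theorem; since $k=o(n)$ we may assume $n$ is large enough that $2k\le n$, and we record that $k=\Theta(\sqrt{n\log n})$, so that $\sqrt{(\log n)/n}\,k^{2}=\Theta(\sqrt{n}\,\log^{3/2}n)$ and $(n/k)^{2}k^{2}=\Theta(n^{2})$. The one elementary fact we use repeatedly is a \emph{padding} observation: if $P,Q\subseteq V(G_{n})$ are disjoint with $|P|,|Q|\le k$, then (using $2k\le n$) we may enlarge them to disjoint $P'\supseteq P$, $Q'\supseteq Q$ with $|P'|=|Q'|=k$, whence by the right-hand inequality of \eqref{eq:disc}
\[
e(G_{n}[P,Q])\le e(G_{n}[P',Q'])\le 2\beta\sqrt{(\log n)/n}\,k^{2}=O\!\left(\sqrt{n}\,\log^{3/2}n\right).
\]
Dually, the left-hand inequality of \eqref{eq:disc} shows that every independent set $I$ of $G_{n}$ satisfies $|I|<2k$, since otherwise $I$ would contain disjoint $A,B$ with $|A|=|B|=k$ and then $e(G_{n}[A,B])\ge\beta\sqrt{(\log n)/n}\,k^{2}>0$, contradicting independence.

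For part~\eqref{itm:GW-edges}, partition $V(G_{n})$ into $t:=\ceil{n/k}$ parts $P_{1},\dots,P_{t}$ each of size at most $k$. The edges inside parts number at most $t\binom{k}{2}=O(nk)=O(n^{3/2}\sqrt{\log n})$, and by the padding observation the edges between distinct parts number at most $\binom{t}{2}\cdot 2\beta\sqrt{(\log n)/n}\,k^{2}$, which is $O(n^{3/2}\sqrt{\log n})$ since $\binom{t}{2}k^{2}=O((n/k)^{2}k^{2})=O(n^{2})$; summing gives $e(G_{n})=O(n^{3/2}\sqrt{\log n})$. For the matching lower bound, take $t':=\floor{n/k}$ pairwise disjoint parts of size exactly $k$ (possible as $t'k\le n$); the edge sets between distinct pairs of these parts are disjoint, so $e(G_{n})\ge\binom{t'}{2}\beta\sqrt{(\log n)/n}\,k^{2}=\Omega(n^{3/2}\sqrt{\log n})$, using $t'=\Theta(\sqrt{n/\log n})$ and hence $\binom{t'}{2}k^{2}=\Theta(n^{2})$.

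For part~\eqref{itm:GW-bip}, let $S\subseteq V(G_{n})$ induce a bipartite subgraph and fix a bipartition $S=X\sqcup Y$. Since $G_{n}[S]$ contains every $G_{n}$-edge among vertices of $S$, both $X$ and $Y$ are independent sets of $G_{n}$, so by the observation above $|X|,|Y|<2k$ and therefore $|S|<4k=O(\sqrt{n\log n})$. Every edge of $G_{n}[S]$ joins $X$ to $Y$; writing $X=X_{1}\sqcup X_{2}$ and $Y=Y_{1}\sqcup Y_{2}$ with all four parts of size at most $k$, the padding observation yields
\[
e(G_{n}[S])=\sum_{i,j\in\{1,2\}}e(G_{n}[X_{i},Y_{j}])\le 4\cdot 2\beta\sqrt{(\log n)/n}\,k^{2}=O\!\left(\sqrt{n}\,\log^{3/2}n\right),
\]
which is the remaining bound.

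None of these steps is a genuine obstacle; the only points requiring care are that each application of \eqref{eq:disc} is made for vertex-sets of size exactly $k$ (which is precisely what the padding observation arranges, via $2k\le n$) and the routine bookkeeping of the quantities $\sqrt{(\log n)/n}\,k^{2}$ and $(n/k)^{2}k^{2}$, both of which are pinned down by $k=\Theta(\sqrt{n\log n})$.
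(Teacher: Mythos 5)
Your proposal is correct. Part~\eqref{itm:GW-bip} is essentially the paper's argument: both bound the independence number by noting that two disjoint size-$\ceil{c\sqrt{n\log n}}$ sets must span an edge by the lower bound in \cref{eq:disc}, and then bound the edges across the bipartition via the upper bound; your explicit padding observation, together with splitting each side as $X_1\sqcup X_2$, $Y_1\sqcup Y_2$ with pieces of size at most $k$, is a careful implementation of the paper's parenthetical remark that one may ``enlarge $A,B$ if necessary'' (indeed, since the sides can have size up to $2k-1$, your splitting step is the clean way to make that remark rigorous). For part~\eqref{itm:GW-edges} you take a genuinely different route: the paper uses an averaging argument --- if $e(G_n)$ fell outside the range $\bigl[\beta\sqrt{(\log n)/n}\binom n2,\,2\beta\sqrt{(\log n)/n}\binom n2\bigr]$, the average of $e(G_n[A,B])$ over all disjoint pairs of size-$k$ sets would violate \cref{eq:disc} --- whereas you cover the vertex set by $\Theta(n/k)$ parts of size at most (respectively exactly) $k$ and sum the bounds of \cref{eq:disc} over pairs of parts, absorbing the within-part edges separately. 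Both are valid; the averaging argument is shorter and yields the two-sided bound with the same constants $\beta,2\beta$ as in \cref{eq:disc}, while your covering argument is more elementary bookkeeping and loses only constant factors, which is all the lemma's $\Theta(\cdot)$ statement requires.
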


    \begin{proof}
        For \eqref{itm:GW-edges}, note that in fact we have
        \[
            \beta\sqrt{\left(\log n\right)/n}\binom{n}{2}\le e\left(G_{n}\right)\le2\beta\sqrt{\left(\log n\right)/n}\binom{n}{2}.
        \]
        Indeed, if $e\left(G_{n}\right)$ fell outside this range, then the average value of $e(G_{n}\left[A,B\right])$, over all disjoint pairs $(A,B)$ of vertex subsets of size $\ceil{c\sqrt{n\log n}}$, would fall outside the range in \cref{eq:disc}.

        For \eqref{itm:GW-bip}, first note that every set of $2\ceil{c\sqrt{n\log n}}$ vertices contains at least one edge, because it can be partitioned into two sets $A,B$, which by \cref{eq:disc} must have an edge between them. That is to say, $\alpha\left(G_{n}\right)=O\left(\sqrt{n\log n}\right)$. But for any induced bipartite subgraph, the two parts of that subgraph are each independent sets, and therefore such a subgraph consists of at most $2\alpha\left(G_{n}\right)=O\left(\sqrt{n\log n}\right)$ vertices. Then, the number of edges between the two parts is
        \[
            O\left(\sqrt{\log n/n}\left(\sqrt{n\log n}\right)^{2}\right)=O\left(\sqrt{n}\log^{3/2}n\right).
        \]
        (We can arbitrarily enlarge $A,B$ if necessary, in order to use \cref{eq:disc}).
    \end{proof}

    We deduce the following corollary by taking blowups of the graphs in \cref{lem:GW-deduction}.

    \begin{cor} \label{cor:GW-blowup}
        For any $k \in \NN$, there is an $n$-vertex triangle-free graph $G_{n}^{\left(k\right)}$ with the following properties.
        \begin{enumerate}[\rm (i)]
            \item \label{itm:cor-GW-edges}
                $e\left(G_{n}^{\left(k\right)}\right)=\Theta\left(n^{3/2}\sqrt{k\log (n/k)}\right)$, and;
            \item \label{itm:cor-GW-bip}
                every induced bipartite subgraph in $G_{n}^{\left(k\right)}$ has $O\left(k^{3/2}\sqrt{n}\log^{3/2}(n/k)\right)$ edges.
        \end{enumerate}
    \end{cor}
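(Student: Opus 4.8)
The plan is to take a vertex-blowup of the graph provided by \cref{lem:GW-deduction}. Given $n$ and $k$ with $n/k$ large (the only interesting regime, and what is needed so that \cref{thm:GuoWarnke} applies), set $N:=\lfloor n/k\rfloor$, let $G_N$ be the graph from \cref{lem:GW-deduction}, and let $G_n^{(k)}$ be obtained by replacing each vertex of $G_N$ with an independent set of $k$ vertices (its \emph{copy-class}), putting a complete bipartite graph between the copy-classes of $u$ and $v$ whenever $uv\in E(G_N)$, and finally adding $n-Nk$ isolated vertices so that $|V(G_n^{(k)})|=n$. A routine pigeonhole argument shows that a blowup of a triangle-free graph is triangle-free, so $G_n^{(k)}$ is triangle-free.

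For \eqref{itm:cor-GW-edges}: each edge of $G_N$ gives rise to exactly $k^2$ edges of $G_n^{(k)}$, and the added vertices are isolated, so $e(G_n^{(k)})=k^2e(G_N)$. Combining this with the estimate $e(G_N)=\Theta\!\left(N^{3/2}\sqrt{\log N}\right)$ from \cref{lem:GW-deduction}\eqref{itm:GW-edges} and $N=\Theta(n/k)$ yields $e(G_n^{(k)})=\Theta\!\left(k^2(n/k)^{3/2}\sqrt{\log(n/k)}\right)=\Theta\!\left(n^{3/2}\sqrt{k\log(n/k)}\right)$.

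For \eqref{itm:cor-GW-bip}: let $H$ be an induced bipartite subgraph of $G_n^{(k)}$ with parts $X,Y$, and let $Z\subseteq V(G_N)$ be the set of vertices with a copy in $V(H)$. Writing $S_X,S_Y$ for the sets of vertices with a copy in $X$, resp.\ in $Y$, the fact that $X$ and $Y$ are independent forces $S_X$ and $S_Y$ to be independent in $G_N$ (two adjacent vertices of $G_N$ have completely joined copy-classes); hence $Z=S_X\cup S_Y$ is covered by two independent sets, so $G_N[Z]$ is bipartite, and \cref{lem:GW-deduction}\eqref{itm:GW-bip} gives $e(G_N[Z])=O\!\left(\sqrt N\log^{3/2}N\right)$. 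Every edge of $H$ joins two copies of an edge of $G_N[Z]$, and each edge $uv$ of $G_N[Z]$ contributes at most $k\cdot k=k^2$ edges (at most $k$ copies of $u$ and $k$ of $v$ lie in $V(H)$), so $e(H)\le k^2e(G_N[Z])=O\!\left(k^2\sqrt{n/k}\log^{3/2}(n/k)\right)=O\!\left(k^{3/2}\sqrt n\log^{3/2}(n/k)\right)$.

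I expect the only point needing care to be the reduction, in \eqref{itm:cor-GW-bip}, from a bipartite subgraph of the blowup to a \emph{bipartite} induced subgraph of $G_N$ — specifically, the observation that $S_X,S_Y$ are independent and that one should invoke the edge bound (not merely the $O(\sqrt{n\log n})$ vertex bound) of \cref{lem:GW-deduction}\eqref{itm:GW-bip}. Everything else is bookkeeping, together with noting that the padding with isolated vertices and the floor in $N=\lfloor n/k\rfloor$ affect none of the $\Theta$/$O$ estimates.
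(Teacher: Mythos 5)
Your proof is correct and takes essentially the same route as the paper: blow up the Guo--Warnke graph on $\Theta(n/k)$ vertices into classes of size $\approx k$ (the paper makes the classes as equal as possible instead of padding with isolated vertices, an immaterial difference), count edges by the factor $k^2$, and project an induced bipartite subgraph of the blowup to an induced bipartite subgraph of $G_N$ to invoke the edge bound of \cref{lem:GW-deduction}\eqref{itm:GW-bip} --- in fact you spell out the independence of $S_X,S_Y$ more explicitly than the paper does. The only addition in the paper is a one-line disposal of the degenerate range $n/k<n_0$ (take $K_{n/2,n/2}$), which your ``only interesting regime'' remark leaves implicit.
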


    \begin{proof}
		We note that the corollary holds trivially when $k = \Omega(n)$ (take, say, $G_{n}^{\left(k\right)}$ to be the complete bipartite graph $K_{n/2, n/2}$). Thus, we may assume that $n/k \ge n_0$, where $n_0$ is as in \Cref{thm:GuoWarnke}.

        Let $G_{\ceil{n/k}}$ be as in \cref{lem:GW-deduction}, and define the $n$-vertex graph $G_{n}^{\left(k\right)}$ as follows. The vertex set of $G_{n}^{\left(k\right)}$ consists of the disjoint union of $\ceil{n/k}$ sets $S_{v}$ each indexed by a vertex $v$ of $G_{\ceil{n/k}}$ and having sizes as equal as possible (so each $S_{v}$ has size $\Theta(k)$). For every edge $uv \in E\left(G_{\ceil{n/k}}\right)$, we put a complete bipartite graph between each vertex of $S_{u}$ and each vertex of $S_{v}$. This graph $G_{n}^{\left(k\right)}$ is called a \emph{blowup} of $G_{\ceil{n/k}}$. Note that $G_{n}^{\left(k\right)}$ is triangle-free, and that $e\left(G_{n}^{\left(k\right)}\right)$ exceeds $e\left(G_{\ceil{n/k}}\right)=\Theta\left(\left(n/k\right)^{3/2}\sqrt{\log (n/k)}\right)$ by a factor of $\Theta(k^2)$, implying \eqref{itm:cor-GW-edges}.

        For \eqref{itm:cor-GW-bip}, note that an induced bipartite subgraph of $G_{n}^{\left(k\right)}$ can only be obtained by `blowing up' an induced bipartite subgraph of $G_{\ceil{n/k}}$. To be more precise, for any induced bipartite subgraph $H$ in $G_{n}^{\left(k\right)}$, consider the set $W$ of all $v\in V\left(G_{\ceil{n/k}}\right)$ such that a vertex in $S_{v}$ appears in $H$. Then $W$ induces a bipartite subgraph of $G_{\ceil{n/k}}$. It follows that every induced bipartite subgraph in $G_{n}^{\left(k\right)}$ has at most $O\left(k\sqrt{\left(n/k\right)\log (n/k)}\right)=O\left(\sqrt{kn\log (n/k)}\right)$ vertices, and at most $O\left(k^{2}\sqrt{n/k}\log^{3/2}(n/k)\right)=O\left(k^{3/2}\sqrt{n}\log^{3/2}(n/k)\right)$ edges.
    \end{proof}
    
    Now we are ready to prove the upper bounds in \cref{thm:ManyEdges}. For every $n$, let $G_n$ be as in \cref{lem:GW-deduction}, so $e\left(G_{n}\right) = \Theta\left(n^{3/2}\sqrt{\log n}\right)$. We will show that if $m\le e\left(G_{n}\right)$ then $f\left(n,m\right)=\Theta\left(m^{1/3}\log^{4/3}m\right)$, and if $m>e\left(G_{n}\right)$ then $f\left(n,m\right)=\Theta\left(m^{3}/n^{4}\right)$.

    First, suppose that $m\le e\left(G_{n}\right)$. Let $n'$ be the least integer such that $m \le e\left(G_{n'}\right)$. It follows that $n' \le n$ and $m=\Theta\left(e\left(G_{n'}\right)\right)=\Theta\left(\left(n'\right)^{3/2}\sqrt{\log n'}\right)$, so $n'=\Theta\left(m^{2/3}/\log^{1/3}m\right)$. If necessary, add some isolated vertices to $G_{n'}$ to obtain a graph $G$ with exactly $n$ vertices and at least $m$ edges. Now, consider any induced bipartite subgraph in $G$, and discard its isolated vertices. By \cref{lem:GW-deduction}, the number of edges in this bipartite subgraph is at most 
    \[
        O\left(\sqrt{n'}\log^{3/2}n'\right)=O\left(m^{1/3}\log^{4/3}m\right),
    \]
    as desired.

    Next, suppose that $m>e\left(G_{n}\right)$. Let $k$ be the maximum integer such that $m \ge e\left(G_{n}^{\left(k\right)}\right)$. Then $m = \Theta\left(e\left(G_{n}^{\left(k\right)}\right)\right) = \Theta\left(n^{3/2}\sqrt{k\log (n/k)}\right)$, so $k \log (n/k) = \Theta\left(m^{2}/n^3)\right)$. By \cref{cor:GW-blowup}, the number of edges in every induced bipartite subgraph in $G_{n}^{\left(k\right)}$ is at most 
    \[
        O\left(k^{3/2}\sqrt{n}\log^{3/2}(n/k)\right)=O\left(m^{3}/n^{4}\right),
    \]
    as desired. This completes the proof of \cref{thm:ManyEdges}.

\section{Lower bounds on minimum degree}\label{sec:lower}

In this section we prove the lower bounds in \cref{thm:refined-degree}. The lower bound for \cref{thm:refined-degree}~\eqref{itm:medium-deg} (which also proves \cref{thm:log} for triangle-free graphs) appears in \cref{sec:log}, the lower bound for \eqref{itm:large-deg} and \eqref{itm:shart-large-deg} appears in \cref{sec:dense}, and the lower bound for \eqref{itm:sharp-medium-deg} appears in \cref{sec:poly-large}.

	\subsection{The sparse regime}\label{sec:log}

        Here we prove \cref{thm:log} for triangle-free graphs, which also provides the lower bound in \cref{thm:refined-degree}~(i). 
    
		Let $G$ be a triangle-free graph with minimum degree at least $d$. We may assume that $G$ is minimal with respect to the minimum degree assumption, namely that every proper induced subgraph of $G$ has minimum degree less than $d$. Under this assumption, note that $G$ is $d$-degenerate. 
		Indeed, by assumption every proper induced subgraph of $G$ has a vertex of degree at most $d$. We can therefore order the vertices of $G$ from left to right in such a way that every vertex has at most $d$ neighbours to its right. Let $N^{+}\left(v\right)$ be the set of neighbours of $v$ to its right, and let $\ell = \floor{\log d / \log \log d}$. Throughout this section we assume that $d$ is sufficiently large. The heart of the proof of \cref{thm:log} for triangle-free graphs is the following claim.
        
        \begin{claim} \label{claim:X-Y-Z}
            There exist two non-empty disjoint sets $X, Y \subseteq V(G)$ with the following properties.
            \begin{enumerate}[\rm (i)]
                \item \label{itm:min-deg}
                    every vertex in $Y$ has at least $\ell$ neighbours in $X$;
                \item \label{itm:size-Z}
                    there are at most $(\ell/7) |Y|$ edges between $X$ and $Y$ that are incident to a vertex $x$ in $X$ with $|N^+(x) \cap X| \ge 4$;
                \item \label{itm:size-X}
                    $|X| < 3 |Y|$;
                \item \label{itm:size-e-Y}
                    $e(Y) < 3 |Y|$.
            \end{enumerate}
        \end{claim}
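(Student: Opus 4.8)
The plan is to construct $X$ and $Y$ by a randomised procedure followed by pruning. Fix a small absolute constant $c_0>0$, let $p:=c_0/d$, and let $R\subseteq V(G)$ be obtained by retaining each vertex independently with probability $p$, so $\E|R|=pn$. Set
\[
    Y_1:=\bigl\{\,v\in V(G)\setminus R : |N(v)\cap R|\ge\ell\,\bigr\}.
\]
Since $G$ has minimum degree at least $d$, the variable $|N(v)\cap R|$ stochastically dominates $\Bin(d,p)$, which has mean $pd=c_0$. Bounding $\Pr[\Bin(d,p)\ge\ell]$ below by $\binom d\ell p^\ell(1-p)^{d-\ell}\ge (c_0/\ell)^\ell e^{-c_0}/2$ and using that the choice $\ell=\floor{\log d/\log\log d}$ makes $\ell^\ell=d^{1-o(1)}$ (this is the sole place where the exact value of $\ell$ matters), one computes $\E|Y_1|/\E|R|\ge \tfrac12 c_0^{\ell-1}e^{-c_0}d^{o(1)}$, which tends to infinity; so for $d$ large we may assume $\E|Y_1|\ge 100\,\E|R|$. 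This multiplicative gap is the slack that will pay for properties~(iii) and~(iv), and any $Y\subseteq Y_1$ will automatically satisfy property~(i).

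To get property~(iv) we replace $Y_1$ by a large subset inducing few edges; the point is that $G[Y_1]$ has small average degree. By triangle-freeness, adjacent vertices $u,v\in Y_1$ have disjoint neighbourhoods and hence disjoint families of $\ge\ell$ "witnesses" in $R$, so $|N(u)\cap R|$ and $|N(v)\cap R|$ are independent and $\Pr[u,v\in Y_1]\le \Pr[|N(u)\cap R|\ge\ell]\,\Pr[|N(v)\cap R|\ge\ell]$; summing over edges and comparing with $\E|Y_1|$ shows $\E\,e(Y_1)=O(c_0)\cdot\E|Y_1|$. Hence (at a typical outcome) $Y_1$ contains a set $Y$ with $e(Y)<3|Y|$ and $|Y|\ge|Y_1|/2$, obtained either as a random sub-sample of $Y_1$ of the right density, or by scanning $Y_1$ along the degeneracy order and keeping a vertex only when it has at most two already-kept neighbours to its left — which forces $e(Y)\le 2|Y|$, and loses only a bounded fraction of $Y_1$ once the average degree of $G[Y_1]$ is at most a small constant. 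Finally, to secure property~(iii) we shrink $R$: put $X:=\{x\in R:N(x)\cap Y\ne\emptyset\}$. Deleting from $R$ the vertices with no neighbour in $Y$ destroys no neighbour of any $y\in Y$, so property~(i) is preserved, and $|X|\le|R|\le 2|Y|$ at a typical outcome since then $|Y|\ge|Y_1|/2\ge 50\,|R|$.

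It remains to handle property~(ii), which limits the $X$--$Y$ edges meeting the "forward-heavy" set $\{x\in X:|N^+(x)\cap X|\ge 4\}$; here the degeneracy order enters directly. For fixed $x$, forward-heaviness forces $|N^+(x)\cap R|\ge 4$, and since $|N^+(x)|\le d$ this has probability at most $\Pr[\Bin(d,p)\ge 4]=O(c_0^4)$; moreover, if $y$ is a neighbour of $x$ then $x,y$ span no triangle, so $N^+(x)$ is disjoint from $N(y)$ and this event is (conditionally) independent of $y\in Y$. A first-moment estimate then bounds the expected number of $X$--$Y$ edges incident to forward-heavy vertices by $O(c_0^4)$ times the expected number of $X$--$Y$ edges, and choosing $c_0$ small enough makes this at most $(\ell/7)\,\E|Y|$. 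One finishes by combining the four expectation bounds and applying Markov's inequality to each, with room to survive a union bound, producing an outcome of the experiment for which all of~(i)--(iv) hold. The main difficulty is precisely this reconciliation, since the conditions pull against each other — (i) wants $R$ dense and $Y_1$ large, (iii)--(iv) want $Y$ sparse and $X$ small compared with $Y$, and (ii) wants $R$ sparse — and they fit together only because of the $d^{1-o(1)}$ slack coming from $\ell^\ell=d^{1-o(1)}$, which is exactly why \cref{thm:log} carries $\log d/\log\log d$ rather than $\log d$. (One point glossed over above: vertices of degree much larger than $d$ spoil the density estimate for $G[Y_1]$ and the first-moment bound in (ii), and are removed beforehand by a short separate argument — exploiting that such a vertex has an enormous independent neighbourhood — so that one may assume the maximum degree is $O(d)$.)
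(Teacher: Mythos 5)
Your plan follows the same general route as the paper (a sparse random set, then the vertices with at least $\ell$ neighbours in it, triangle-freeness to get independence along edges, and the degeneracy order with the ``at least $4$ forward neighbours in $X$'' condition for (ii)), but it is missing the one device that makes the paper's calculation close, and without it your central density estimate is false. Write $q:=\Pr[\Bin(d,p)\ge \ell]$ with your $p=c_0/d$. The very computation you use to get $\E|Y_1|\ge 100\,\E|R|$ says precisely that $\E|Y_1|/\E|R|=\Theta(dq/c_0)\to\infty$, i.e.\ $dq\to\infty$ for any fixed $c_0$. Now take $G$ to be $d$-regular and triangle-free, so $e(G)=nd/2$. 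By the independence you invoke, for each edge $uv$ one has $\Pr[u,v\in Y_1]=(1-p)^2\Pr[\Bin(d-1,p)\ge\ell]^2=\Theta(q^2)$, hence $\E e(Y_1)=\Theta(ndq^2)=\Theta(dq)\cdot\E|Y_1|$, not $O(c_0)\,\E|Y_1|$: your two claims together would force $dq=O(c_0)$ and $dq/c_0\to\infty$ simultaneously. So $G[Y_1]$ has superconstant expected average degree, the pruning step may discard all but a vanishing fraction of $Y_1$, and then neither (iii) ($|X|<3|Y|$) nor (ii) (which must be measured against the shrunken $|Y|$) is recovered. The paper's fix is the rebalancing coin: $p_u$ is defined by $(1-p)\Pr[\Bin(d(u),p)\ge\ell]\,p_u=p$, so that $\Pr[u\in Y]$ equals $p=1/d$ \emph{exactly} for every vertex; then $\Pr[u,v\in Y]\le p^2$ and the degeneracy bound $e(G)\le nd$ give $\E e(Y)\le \E|Y|$, and a similar computation (using \cref{lem:binomial}~\eqref{itm:binom-two}) controls $|Z|$. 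This normalization --- making the membership probability as small as $1/d$ rather than $q=d^{-1+o(1)}$ --- is exactly where \cref{lem:binomial}~\eqref{itm:binom-one} forces $\ell\approx\log d/\log\log d$, and it also disposes of the vertices of degree $\gg d$; your proposed reduction to maximum degree $O(d)$ is not actually supplied (you cannot delete such vertices without destroying the minimum-degree hypothesis, cf.\ $K_{d,n-d}$, and a huge independent neighbourhood does not by itself produce the required bipartite subgraph).

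The finish is also a gap. Markov's inequality gives only upper-tail control of $|X|$, $e(Y_1)$ and the $Z$-type count; it gives no lower bound on $|Y_1|$ (or on $|Y|$ relative to $|R|$), and you have proved no concentration, so ``Markov plus a union bound'' cannot produce a single outcome satisfying (i)--(iv). The paper sidesteps this entirely by taking one linear combination, proving $\E\left[|Y|-\tfrac13|X|-\tfrac13 e(Y)-\tfrac{7}{\ell}|Z|\right]>0$, and choosing an outcome where this quantity is positive, which yields all four properties at once. To repair your argument you would need both ingredients: the $p_u$-rebalancing (or, say, capping each neighbourhood at exactly $d$ vertices and letting $c_0=c_0(d)\to 0$ so that $dq=\Theta(1)$), and the one-shot expectation of a linear combination in place of the Markov/union-bound step; at that point you have essentially reconstructed the paper's proof.
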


    	Before proving \cref{claim:X-Y-Z}, we will show how it implies \cref{thm:log} for triangle-free graphs.

		\begin{proof}[Proof of \cref{thm:log} for triangle-free graphs]
			Let $X$ and $Y$ be the sets given by \cref{claim:X-Y-Z}. Let $X_0$ be the set of vertices $x \in X$ for which $|N^+(x) \cap X| \ge 4$. The graph $G[X \setminus X_0]$ is $3$-degenerate, hence it is $4$-colourable. Let $\{X_1, \ldots, X_4\}$ be a partition of $X \setminus X_0$ into four independent sets. Let $Y_0$ be the set of vertices in $Y$ that send at least $3\ell/7$ edges into $X_0$. The number of edges between $X_0$ and $Y_0$ is at least $(3\ell / 7) |Y_0|$, and at most $(\ell / 7)|Y|$ (by Property \eqref{itm:size-Z} of the claim). So $|Y_0| \le (1/3)|Y|$. Note that the number of edges in $Y\setminus Y_0$ is at most $e(Y) \le 3|Y|  \le (9/2)|Y\setminus Y_0|$, due to Property \eqref{itm:size-e-Y}. In other words, the average degree of $G[Y\setminus Y_0]$ is at most $9$. Thus by Tur\'an's theorem \cite{Tur41}, $Y\setminus Y_0$ contains an independent set $Y'$ of size at least $(1/10)|Y\setminus Y_0| \ge (1/15)|Y|$. To prove \cref{thm:log}, it suffices to show that $G[X_i,Y']$ has average degree at least $\Omega(\ell)$ for some $i\in [4]$. 
			
			From Property \eqref{itm:min-deg} and the definition of $Y_0$, we see that every vertex in $Y\setminus Y_0$ has at least $4\ell/7$ neighbours in $X\setminus X_0$. Moreover, as 
			\[
			\left(\tfrac{1}{4}|X\setminus X_0| +|Y'|\right) \cdot \sum_{1\le i \le 4}e(X_i,Y')=\tfrac{1}{4}\sum_{1\le i\le 4} (|X_i|+|Y'|)\cdot e(X\setminus X_0,Y'),
			\]
			there exists $i\in [4]$ such that $\left(\frac{1}{4}|X\setminus X_0|+|Y'|\right) \cdot e(X_i,Y')\ge \frac{1}{4}(|X_i|+|Y'|)\cdot e(X\setminus X_0,Y')$. Therefore, the average degree of $G[X_i,Y']$ is
			\[
				\frac{2 e(X_i,Y')}{|X_i|+|Y'|} \ge \frac{e(X\setminus X_0,Y')}{(1/2)|X\setminus X_0|+2|Y'|}\ge \frac{(4\ell/7)|Y'|}{(1/2)|X\setminus X_0|+2|Y'|} \ge (8/343)\ell.
			\]
			Here the last inequality holds since $|Y'| \ge (1/15)|Y|$ and $\frac{(4\ell/7)|Y'|}{(1/2)|X\setminus X_0|+2|Y'|}$ is an increasing function in $|Y'|$, and since $|X\setminus X_0| \le 3|Y|$ (by Property \eqref{itm:size-X}). Thus $G$ contains an induced bipartite graph with minimum degree at least $(8/343)\ell=\Omega(\log d/\log \log d)$.
		\end{proof}

		The rest of this section is devoted to the proof of \cref{claim:X-Y-Z}. Let $p=1/d$. For every vertex $u \in V(G)$, define $p_u$ to satisfy $(1 - p) \cdot \Pr[\Bin(d(u), p) \ge \ell] \cdot p_u = p$. We define two random sets $X$ and $Y$ as follows. First, each vertex $x\in V(G)$ is included in $X$ with probability $p$, independently. Second, for every vertex $y\in V(G)$, we put it in $Y$ if it is not in $X$, it has at least $\ell$ neighbours in $X$, and a biased coin flip, whose probability of heads is $p_y$, turns out to be heads. (These coin flips are independent for each $y\in V(G)$.) Let $Z$ be the set of edges $xy$ where $x \in X$, $y \in Y$ and $|N^+(x) \cap X| \ge 4$. In order to prove \cref{claim:X-Y-Z}, we prove the following claim.

        \begin{claim} \label{claim:expectations}
            $\Ex \left[ |Y| - (1/3)|X| - (1/3)e(Y) - (7/\ell)|Z| \right]>0$.
        \end{claim}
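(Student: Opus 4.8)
The plan is to evaluate or bound each of the four expectations $\Ex|X|,\Ex|Y|,\Ex e(Y),\Ex|Z|$ separately, using triangle-freeness to break the relevant neighbourhood events into independent pieces and $d$-degeneracy to control $e(G)$ and the sizes $|N^+(\cdot)|$. Write $n=|V(G)|$. Trivially $\Ex|X|=np$. For any vertex $u$, the event ``$u$ has at least $\ell$ neighbours in $X$'' depends only on the $X$-membership of $N(u)$, hence is independent of $\{u\in X\}$ and of $u$'s coin, so $\Pr[u\in Y]=(1-p)\Pr[\Bin(d(u),p)\ge\ell]\,p_u=p$ by the defining equation of $p_u$; thus $\Ex|Y|=np$ as well. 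Consequently $\Ex[|Y|-\tfrac13|X|]=\tfrac23np$, and it remains to show $\tfrac13\Ex e(Y)+\tfrac7\ell\Ex|Z|<\tfrac23np$.

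For $\Ex e(Y)$, fix an edge $uv$. Conditioning on $u,v\notin X$ and on the two independent coins, the events ``$u$ has $\ge\ell$ neighbours in $X$'' and ``$v$ has $\ge\ell$ neighbours in $X$'' are determined by the $X$-membership of $N(u)\setminus\{v\}$ and of $N(v)\setminus\{u\}$ respectively, which are disjoint since $G$ is triangle-free, hence independent; combined with $\Pr[\Bin(d(u)-1,p)\ge\ell]\le\Pr[\Bin(d(u),p)\ge\ell]$ and the definition of $p_u$ this gives $\Pr[u,v\in Y]\le p^2$. Therefore $\Ex e(Y)\le p^2e(G)\le p^2dn=np$, so $\tfrac13\Ex e(Y)\le\tfrac13np$.

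For $\Ex|Z|=\sum_y\Ex|\{x\in N(y):xy\in Z\}|$, fix an edge $xy$ and condition on $x\in X$, $y\notin X$, and $y$'s coin. Then ``$|N^+(x)\cap X|\ge 4$'' is determined by the $X$-membership of $N^+(x)\setminus\{y\}$, a set of size at most $|N^+(x)|\le d$, while ``$y$ has $\ge\ell$ neighbours in $X$'' becomes ``at least $\ell-1$ of $N(y)\setminus\{x\}$ lie in $X$''; triangle-freeness makes these index sets disjoint, so
\[
\Pr[xy\in Z,\ x\in X,\ y\in Y]\le p(1-p)p_y\,q_4\,\Pr[\Bin(d(y)-1,p)\ge\ell-1],
\]
where $q_4:=\Pr[\Bin(d,p)\ge 4]$. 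Summing over $x\in N(y)$ and writing $\mu_y:=d(y)p$, the crucial quantity is
\[
d(y)(1-p)p_y\Pr[\Bin(d(y)-1,p)\ge\ell-1]=\mu_y\cdot\frac{\Pr[\Bin(d(y)-1,p)\ge\ell-1]}{\Pr[\Bin(d(y),p)\ge\ell]}.
\]
I would bound this ratio by $\max(2,2\ell/\mu_y)$ via an elementary estimate: with $m=d(y)$, $a=\Pr[\Bin(m-1,p)\ge\ell]$ and $b=\Pr[\Bin(m-1,p)=\ell-1]$, the ratio equals $(a+b)/(a+pb)$, and since $\Pr[\Bin(m,p)=\ell]=(\mu_y/\ell)b$ one has $a+pb\ge\max(a,(\mu_y/\ell)b)$, so the cases $a\ge b$ and $a<b$ give the two bounds. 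Hence the crucial quantity is at most $\max(2\mu_y,2\ell)\le 2\mu_y+2\ell$, and
\[
\Ex|Z|\le pq_4\sum_y(2\mu_y+2\ell)=2pq_4\bigl(2pe(G)+\ell n\bigr)\le 2pq_4n(\ell+2),
\]
using $e(G)\le dn$ and $pd=1$.

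Combining the three bounds,
\[
\Ex\bigl[|Y|-\tfrac13|X|-\tfrac13e(Y)-\tfrac7\ell|Z|\bigr]\ge np\Bigl(\tfrac13-14q_4\cdot\tfrac{\ell+2}{\ell}\Bigr),
\]
which is positive once $d$ is large, because $\ell\to\infty$ and $q_4\to\Pr[\mathrm{Poisson}(1)\ge 4]=1-\tfrac83e^{-1}<\tfrac1{42}$. The main obstacle is exactly the $\Ex|Z|$ estimate: a minimal triangle-free graph of minimum degree $d$ may contain vertices of degree $\Theta(n)$, so bounding $d(y)$ directly is hopeless, and everything hinges on the uniform tail inequality $d(y)\rho_y=O\bigl(\max(d(y)p,\ell)\bigr)$ above, where $\rho_y:=(1-p)p_y\Pr[\Bin(d(y)-1,p)\ge\ell-1]$. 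The same family of binomial estimates also pins down the choice $\ell=\floor{\log d/\log\log d}$, through the inequality $(1-p)\Pr[\Bin(d,p)\ge\ell]\ge p$ needed for $p_y\le 1$. Finally, the constant $4$ is essentially forced: the last step requires $14q_4<\tfrac13$, which would fail with $3$ in place of $4$, and is not delivered by the crude bound $q_4\le\binom d4p^4\le\tfrac1{24}$, so one needs the true (near-Poisson) value of $q_4$.
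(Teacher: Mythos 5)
Your proof is correct, and it follows the same first-moment skeleton as the paper's: $\Ex|X|=\Ex|Y|=np$ from the calibration of $p_u$, $\Ex[e(Y)]\le p^2e(G)\le np$ via triangle-freeness and $d$-degeneracy, and then a bound on $\Ex|Z|$. The only genuine divergence is in the $\Ex|Z|$ estimate. The paper bounds the $4$-neighbour event crudely by $\binom{d}{4}p^4\le 1/24$ and controls the tail ratio by $\Pr[\Bin(m-1,p)\ge\ell-1]\le(\ell+1)\Pr[\Bin(m,p)\ge\ell]$ for $m\ge d$ (\cref{lem:binomial}~(ii)), so each edge contributes at most $p^2(\ell+1)/24\le p^2\ell/23$. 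You instead prove the ratio bound $\max\left(2,\,2\ell/(mp)\right)$ via the decomposition $\Pr[\Bin(m,p)\ge\ell]=a+pb$, multiply by $d(y)p$ and sum degree-weighted over vertices; this is sharper in the ratio but forces you to use the near-Poisson value $q_4\approx 1-\tfrac{8}{3}e^{-1}<\tfrac1{42}$ rather than the crude $1/24$, and indeed your closing computation would fail with $1/24$. The extra precision is exactly what lets you absorb the factor of $2$ coming from summing over ordered incidences (either endpoint of an edge may play the role of $x$), a point your bookkeeping handles explicitly and carefully. Your identification of \cref{lem:binomial}~(i) (needed for $p_u\le1$) as the constraint fixing $\ell=\floor{\log d/\log\log d}$ also matches the paper. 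The one spot to tighten for a fully self-contained write-up is the assertion $q_4\to\Pr[\mathrm{Poisson}(1)\ge4]$: you should include the elementary finite-$d$ estimate showing $q_4\le 1-\tfrac{8}{3}e^{-1}+O(1/d)$, but this is routine.
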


		Before proving \cref{claim:expectations}, we show that it implies \cref{claim:X-Y-Z}.

		\begin{proof}[Proof of \cref{claim:X-Y-Z} given \cref{claim:expectations}]
			By \cref{claim:expectations}, there is a choice of sets $X$ and $Y$ as above, with
			\[
				|Y| - (1/3)|X| - (1/3)e(Y) - (7/\ell)|Z| > 0.
			\]
			In particular, $X$ and $Y$ are disjoint; every vertex in $Y$ sends at least $\ell$ edges into $X$; $|Z| < (\ell / 7) |Y|$, i.e.\ there are at most $(\ell / 7) |Y|$ edges that are incident with a vertex $x$ in $X$ with $|N^+(x) \cap X| \ge 4$; $|X| < 3|Y|$; and $e(Y) < 3 |Y|$.
			\cref{claim:X-Y-Z} follows.
		\end{proof}

		To prove \cref{claim:expectations} we shall need the following routine estimates concerning the binomial distribution. The reader may wish to skip their verification and go straight to the proof of \cref{claim:expectations}, given after \cref{lem:binomial}. We remark that the need to satisfy \eqref{itm:binom-one} is the bottleneck in the choice of the value $\ell$.

        \begin{lem}\label{lem:binomial} 
            We have
            \begin{enumerate}[\rm (i)]
                \item \label{itm:binom-one}
                    $(1 - p) \cdot \Pr[ \Bin(d, p) \ge \ell ] \ge p$, 
                \item \label{itm:binom-two}
                    $\Pr[\Bin(m-1, p) \ge \ell-1] \le (\ell+1)\cdot \Pr[\Bin(m, p) \ge \ell]$ for every $m \ge d$.
            \end{enumerate}
        \end{lem}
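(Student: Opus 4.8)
The plan is to verify both estimates by direct, if slightly delicate, manipulation of binomial tail probabilities, keeping careful track of where the choice $\ell = \floor{\log d/\log\log d}$ is used. For part \eqref{itm:binom-one}, I would first observe that $\Pr[\Bin(d,p)\ge \ell] \ge \binom{d}{\ell} p^\ell (1-p)^{d-\ell}$, i.e.\ just the single term counting exactly $\ell$ successes. With $p = 1/d$, we have $\binom{d}{\ell}p^\ell \ge (d/\ell)^\ell \cdot d^{-\ell}\cdot(\text{const}) = \ell^{-\ell}$ up to constants (using $\binom{d}{\ell}\ge (d/\ell)^\ell$), and $(1-p)^{d-\ell} = (1-1/d)^{d-\ell} \ge e^{-1}(1+o(1))$. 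So the left side is at least roughly $e^{-2}\ell^{-\ell}$, and we must check this is at least $p = 1/d$; equivalently, $\ell^\ell \le e^{-2} d$ up to the $(1-p)$ factor. Taking logs, this asks $\ell \log \ell \le \log d - O(1)$. With $\ell = \floor{\log d/\log\log d}$ we get $\ell \log \ell \le (\log d/\log\log d)\cdot \log(\log d/\log\log d) \le (\log d/\log\log d)\cdot \log\log d = \log d$, which gives exactly what is needed (with a little room to absorb the $O(1)$, since the inequality $\ell\log\ell \le \log d$ has slack once $d$ is large). This is the step the authors flag as the bottleneck for the choice of $\ell$, so I would present it carefully, being precise about the $\binom{d}{\ell}\ge (d/\ell)^\ell$ bound and the $(1-1/d)^d \to e^{-1}$ estimate, and noting that "$d$ sufficiently large" absorbs lower-order terms.

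For part \eqref{itm:binom-two}, the idea is to compare $\Pr[\Bin(m-1,p)\ge \ell-1]$ with $\Pr[\Bin(m,p)\ge \ell]$ term by term. One clean way: couple $\Bin(m,p)$ as $\Bin(m-1,p) + B$ where $B\sim\Bin(1,p)$ is an independent final coordinate. Then $\{\Bin(m,p)\ge \ell\} \supseteq \{\Bin(m-1,p)\ge \ell-1\}\cap\{B=1\}$, which only gives a factor $p$ in the wrong direction, so instead I would work directly with the tail identity. Write $T_j = \Pr[\Bin(m-1,p) = j]$. Then $\Pr[\Bin(m-1,p)\ge \ell-1] = \sum_{j\ge \ell-1} T_j$ and $\Pr[\Bin(m,p)\ge \ell] \ge \Pr[\Bin(m-1,p)\ge \ell-1, B=1] + \Pr[\Bin(m-1,p)\ge \ell] $; more usefully, compare individual terms $\Pr[\Bin(m,p)=k]$ to $\Pr[\Bin(m-1,p)=k-1]$: their ratio is $\binom{m}{k}p^k(1-p)^{m-k} \big/ \binom{m-1}{k-1}p^{k-1}(1-p)^{m-k} = \frac{m}{k}\,p = \frac{m}{kd}$. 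Since $m\ge d$ and $k$ ranges over values $\ge \ell$, this ratio is at least $\frac{1}{k}$, and summing, $\Pr[\Bin(m,p)\ge \ell] = \sum_{k\ge\ell}\Pr[\Bin(m,p)=k] \ge \sum_{k\ge \ell}\frac{1}{k}\Pr[\Bin(m-1,p)=k-1]$. The largest term in $\Pr[\Bin(m-1,p)\ge \ell-1]$ is the one at $j=\ell-1$ (the distribution is decreasing in this tail range since the mode is near $mp\approx 1 \ll \ell$), so $\Pr[\Bin(m-1,p)\ge\ell-1] \le (\ell+1)\,T_{\ell-1}$ provided the tail $\sum_{j\ge \ell-1}T_j \le (\ell+1)T_{\ell-1}$; this holds because consecutive ratios $T_{j+1}/T_j = \frac{m-1-j}{j+1}p \le \frac{m}{(j+1)d}\le \frac{1}{j+1} \le 1/\ell$ for $j\ge \ell-1$, so the tail is dominated by a geometric series with ratio $\le 1/\ell$, giving $\sum_{j\ge\ell-1}T_j \le T_{\ell-1}\cdot\frac{1}{1-1/\ell} \le 2T_{\ell-1} \le (\ell+1)T_{\ell-1}$. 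Combining with the previous display evaluated at $k=\ell$ (so $T_{k-1}=T_{\ell-1}$): $\Pr[\Bin(m,p)\ge\ell]\ge \frac{1}{\ell}T_{\ell-1} \ge \frac{1}{\ell(\ell+1)}\Pr[\Bin(m-1,p)\ge\ell-1]$, which rearranges to the claimed bound (in fact with room to spare — one could even get a factor $\ell$ rather than $\ell+1$, but $\ell+1$ is all that is needed).

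The main obstacle is part \eqref{itm:binom-one}: getting the constants and the $(1-1/d)^d\to e^{-1}$ and $\binom{d}{\ell}\ge(d/\ell)^\ell$ estimates to line up so that $\ell^\ell \lesssim d$ really does follow from $\ell=\floor{\log d/\log\log d}$, and making sure the "$d$ sufficiently large" hypothesis is genuinely enough to absorb all the multiplicative slack. Part \eqref{itm:binom-two} is more routine once one notices that in the relevant range $j \ge \ell-1 \gg mp$ the binomial pmf decays geometrically with ratio at most $1/\ell$, so both the "tail $\le (\ell+1)\times$ leading term" step and the term-by-term comparison go through cleanly.
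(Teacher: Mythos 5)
Your treatment of part \eqref{itm:binom-one} is correct and is essentially the paper's argument: keep only the term $\Pr[\Bin(d,p)=\ell]$, use $\binom{d}{\ell}\ge(d/\ell)^{\ell}$ together with $(1-p)^{d-\ell+1}\ge e^{-2}$, and verify $\ell\log\ell+O(1)\le\log d$, which the choice $\ell=\floor{\log d/\log\log d}$ gives with slack of order $\log d\cdot\log\log\log d/\log\log d$.

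Part \eqref{itm:binom-two}, however, has a genuine gap. Your tail-domination step asserts $T_{j+1}/T_j=\frac{m-1-j}{j+1}\cdot\frac{p}{1-p}\le\frac{m}{(j+1)d}\le\frac{1}{j+1}$ (with $T_j=\Pr[\Bin(m-1,p)=j]$), but the last inequality requires $m\le d$, whereas the hypothesis is $m\ge d$ with no upper bound --- indeed you correctly use $mp=m/d\ge 1$ in the opposite direction two lines earlier, and in the application $m=d(y)$ can be as large as $n-1$. When $m/d\gg\ell$, the mode of $\Bin(m-1,p)$ is about $m/d>\ell$, the pmf is \emph{increasing} at $j=\ell-1$, and the tail $\Pr[\Bin(m-1,p)\ge\ell-1]$ (then close to $1$) is nowhere near $(\ell+1)T_{\ell-1}$. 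A secondary issue: even where the geometric decay does hold, your chain $\Pr[\Bin(m,p)\ge\ell]\ge\frac{1}{\ell}T_{\ell-1}\ge\frac{1}{\ell(\ell+1)}\Pr[\Bin(m-1,p)\ge\ell-1]$ yields a factor $\ell(\ell+1)$ (or $2\ell$ with your sharper tail bound), not $\ell+1$; this constant is not cosmetic, since the downstream computation of $\Ex[|Z|]$ relies on $(\ell+1)/24\le\ell/23$. The repair is the paper's decomposition: write $\Pr[\Bin(m-1,p)\ge\ell-1]=\Pr[\Bin(m-1,p)=\ell-1]+\Pr[\Bin(m-1,p)\ge\ell]$, bound the second summand by $\Pr[\Bin(m,p)\ge\ell]$ via monotonicity in the number of trials, and bound the first by $\frac{\ell}{mp}\Pr[\Bin(m,p)=\ell]\le\ell\cdot\Pr[\Bin(m,p)\ge\ell]$ using exactly your term-by-term ratio with $mp\ge1$. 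This works uniformly in $m\ge d$ and gives the stated factor $\ell+1$.
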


        \begin{proof}
            For Part \eqref{itm:binom-one} we bound
            \begin{align*}
                (1 - p) \cdot \Pr[ \Bin(d, p) \ge \ell ] 
                & \ge \binom{d}{\ell} p^{\ell} (1-p)^{d-\ell+1} 
                \ge \left( \frac{pd}{\ell}\right)^\ell e^{-2p(d-\ell+1)} \\
                & \ge \ell^{-\ell} e^{-2}=e^{-\ell \log \ell -2}\\
                &\ge e^{-\frac{\log d}{\log \log d}(\log \log d - \log \log \log d) - 2} \\
                & \ge e^{- \log d} = p.
            \end{align*}
            Here we used the inequalities $\binom{k}{t} \ge (k/t)^t$, which holds for $1 \le t \le k$, and $1-x \ge e^{-2x}$, which holds for $0 < x < 1/2$. For Part \eqref{itm:binom-two} we estimate
            \begin{align*}
            \frac{\Pr[\Bin(m-1, p) \ge \ell-1]}{\Pr[\Bin(m, p) \ge \ell]} 
                    & = \frac{\Pr[\Bin(m-1, p) = \ell-1]}{\Pr[\Bin(m, p) \ge \ell]} + \frac{\Pr[\Bin(m-1, p) \ge \ell]}{\Pr[\Bin(m, p) \ge \ell]} \\
                    & \le \frac{\Pr[\Bin(m-1, p) = \ell-1]}{\Pr[\Bin(m, p) = \ell]} + 1 \\
                    & = \frac{\binom{m-1}{\ell-1} p^{\ell-1} (1-p)^{m-\ell}}{\binom{m}{\ell} p^\ell (1-p)^{m-\ell}} + 1 \\
                    & = \frac{\ell}{mp} + 1 \le \ell+1. \qedhere
            \end{align*}
        \end{proof}    
        
        \begin{proof}[Proof of \cref{claim:expectations}]
            We shall estimate the expectations of the four quantities $|X|,|Y|, e(Y),|Z|$ separately, and show that they satisfy the desired inequality. Note that, by \cref{lem:binomial}~\eqref{itm:binom-one} and the choice of $p_u$, we have $0 < p_u \le 1$. In addition, the probability that a vertex $y$ is in $Y$ is $(1 - p) \cdot \Pr[\Bin(d(y), p) \ge \ell] \cdot p_y = p$. 
             
            Since each vertex is in $X$ with probability $p$, and similarly each vertex is in $Y$ with probability $p$, we have
            \begin{equation} \label{eqn:exp-size-X}
                \Ex[|X|] = \Ex[|Y|] = np = n/d.
            \end{equation}
            
            Let $uv$ be an edge. Then the probability of the event $u, v \in Y$ is 
            \begin{align*}
                &p_u \cdot p_v \cdot \Pr\!\big[|N(u) \cap X|, |N(v) \cap X| \ge \ell \text{ and } u, v \notin X\big] \\
                =\,\,&p_u \cdot p_v \cdot (1 - p)^2 \cdot \Pr\!\big[|(N(u) \setminus \{v\}) \cap X|, |(N(v) \setminus \{u\}) \cap X| \ge \ell\big] \\
                =\,\,&p_u \cdot p_v \cdot (1 - p)^2 \cdot \Pr\!\big[|(N(u) \setminus \{v\}) \cap X| \ge \ell\big] \cdot \Pr\!\big[|(N(v) \setminus \{u\}) \cap X| \ge \ell\big] \\
                \le \,\, &p_u \cdot p_v \cdot (1 - p)^2 \cdot \Pr[\Bin(d(u), p) \ge \ell] \cdot \Pr[\Bin(d(v), p) \ge \ell]
                = p^2.
            \end{align*}
            In the second equality we used the fact that the sets $N(u) \setminus \{v\}$ and $N(v) \setminus \{u\}$ are disjoint (as $G$ is triangle-free). This implies that the events $|(N(u) \setminus \{v\}) \cap X| \ge \ell$ and $|(N(v) \setminus \{u\}) \cap X| \ge \ell$ are independent. As $G$ is $d$-degenerate, it follows that
            \begin{equation} \label{eqn:exp-edges-Y}
                \Ex[e(Y)] \le e(G) p^2 \le nd p^2 = n/d.
            \end{equation}
            
            Given an edge $xy$, the probability that $xy \in Z$, with $x \in X, y \in Y$, is
            \begin{align*}
                & p_y \cdot \Pr\!\big[ x \in X,\, |N^+(x) \cap X| \ge 4, \, y \notin X, \, |N(y) \cap X| \ge \ell \big] \\
                =\, \, & p_y \cdot p  \cdot \Pr\!\big[|(N^+(x) \setminus \{y\}) \cap X| \ge 4 \big] \cdot (1 - p) \cdot \Pr\!\big[|(N(y)     \setminus \{x\}) \cap X| \ge \ell-1\big] \\
                \le\, \, & p_y \cdot p  \cdot \binom{d}{4} p^4 \cdot (1 - p) \cdot \Pr\!\big[\Bin(d(y) - 1, p) \ge \ell-1\big] \\
                \le\, \,& p_y \cdot p   \cdot \frac{(pd)^4}{4!} \cdot (1 - p) \cdot \Pr[\Bin(d(y), p) \ge \ell] \cdot (\ell+1) \\
                =\, \,& p^2 \cdot \frac{1}{24} \cdot (\ell+1) \, \le \, p^2 \ell/23.
            \end{align*}
            In the first equality we again used the assumption that $G$ is triangle-free to deduce independence of events that depend on $N(x) \setminus \{y\}$ and $N(y) \setminus \{x\}$. In the second inequality we used the inequality $\binom{k}{t} \le k^t/t!$ and \cref{lem:binomial}~\eqref{itm:binom-two} and in the last equality we used the definition of $p_y$. We deduce that
            \begin{align} \label{eqn:exp-size-Z}
                \Ex[|Z|] \le e(G) \cdot  (p^2\ell/23) \le nd \cdot (p^2\ell/23) = n\ell/(23d).
            \end{align}
            
            Finally, it follows from \eqref{eqn:exp-size-X}, \eqref{eqn:exp-edges-Y} and \eqref{eqn:exp-size-Z} that
            \[
             \Ex \left[ |Y| - (1/3)|X| - (1/3)e(Y) - (7/\ell)|Z| \right] > n/d- n/(3d)-n/(3d)-n/(3d) = 0,
            \]
            finishing the proof of \cref{claim:expectations}.
        \end{proof}

	\subsection{The dense regime}\label{sec:dense}

		In this subsection we will provide two different proofs of the fact that $g(n,d) = \Omega(d^2/n)$. We note that a different proof of the bound $g(n, d) \ge d^2 / 2n$ (which is the bound obtained in the first proof) is given in \cite{BVKP}.
		This will establish the lower bound in parts \eqref{itm:large-deg} and \eqref{itm:shart-large-deg} of \cref{thm:refined-degree}. Let $G$ be a triangle-free graph on $n$ vertices with minimum degree at least $d$.

		\begin{proof}[First proof]
			For each vertex $v\in V(G)$, we fix a subset $A_v\subseteq N(v)$ of size $d$. Let $X=\{x_1,x_2\}$ be a uniformly random pair of vertices of $V(G)$, and define a vertex set $Y$ as follows: $v\in V(G)$ is included in $Y$ if and only if $A_v\cap X \ne \emptyset$. Since $G$ is triangle-free, $G[Y]$ is a bipartite graph with bipartition $\{N(x_1)\cap Y, (N(x_2)\setminus N(x_1))\cap Y\}$. Thus, in order to prove the theorem, it suffices to show that  $G[Y]$ has average degree at least $d^2/n$ (and therefore has an induced subgraph of minimum degree at least $d^2/(2n)$) for some choice of $X$.

			For each vertex $v\in V(G)$, we have $\Pr(v\in Y)=1-\binom{n-d}{2}/\binom{n}{2}<\frac{2d}{n}$. Given an edge $uv \in E(G)$, we have $N(u)\cap N(v)=\emptyset$ as $G$ is triangle-free, and so $\Pr(u,v \in Y)=\frac{|A_u||A_v|}{\binom{n}{2}}=\frac{d^2}{\binom{n}{2}}> \frac{2d^2}{n^2}$. Therefore, using the fact that $e(G) \ge nd/2$, we obtain
			\[
				\Ex\left[e(Y)-\frac{d^2}{2n}|Y|\right] > e(G)\cdot \frac{2d^2}{n^2}-\frac{d^2}{2n}\cdot n \cdot \frac{2d}{n} \ge 0.
			\]
			Hence $e(Y)>\frac{d^2}{2n}|Y|$ for some choice of $X,$ as desired.
		\end{proof}

		\begin{proof}[Second proof, with a slightly worse bound]
			Since the statement holds trivially for $d\le 2\sqrt{n}$, we may assume that $d>2\sqrt{n}$. Given $uv \in E(G)$, let $c(u,v)$ be the number of 4-cycles passing through $\{u,v\}$. Since $G$ is triangle-free, we see that $N(u)\setminus \{v\}$ and $N(v)\setminus\{u\}$ are two disjoint independent sets, and that there are exactly $c(u,v)$ edges between $N(u)\setminus \{v\}$ and $N(v)\setminus\{u\}$. Hence the subgraph of $G$ induced by $\left(N(u)\setminus\{v\}\right) \cup \left(N(v)\setminus \{u\}\right)$ is a bipartite graph with average degree at least $2c(u,v)/(d(u)+d(v)-2)$, and thus has an induced bipartite subgraph with minimum degree at least $c(u,v)/(d(u)+d(v)-2)$. In order to prove that $g(n,d)\ge d^2/(4 n)$, it thus suffices to show that $q \ge d^2/(4n)$, where
			\[
				q:=\max_{uv \in E(G)}\frac{c(u,v)}{d(u)+d(v)-2}.
			\]
			From the definition of $q$, we see that $q\cdot \sum_{uv \in E(G)} (d(u)+d(v)-2) \ge \sum_{uv \in E(G)} c(u,v)$. Letting $c_4$ denote the number of 4-cycles in $G$, we then obtain 
			\[
				q \ge \frac{\sum_{uv \in E(G)} c(u,v)}{\sum_{uv \in E(G)} (d(u)+d(v)-2)}=\frac{2c_4}{\sum_{u\in V(G)}\binom{d(u)}{2}}.
			\]
			By Jensen's inequality, 
			\[
				\sum_{x,y \in V(G)} \binom{|N(x) \cap N(y)|}{2} 
				\ge \binom{n}{2} \binom{\sum_{x,y \in V(G)} \binom{|N(x) \cap N(y)|}{2} / \binom{n}{2}}{2}.
			\]
			Noting that $\sum_{x,y\in V(G)}|N(x)\cap N(y)|=\sum_{u\in V(G)}\binom{d(u)}{2}$, we get
			\begin{align*}
				2c_4&=\sum_{x,y\in V(G)}\binom{|N(x)\cap N(y)|}{2}\\
				&\ge \binom{n}{2}\binom{\sum_{u\in V(G)} \binom{d(u)}{2}/\binom{n}{2}}{2}\\
				& \ge \frac{1}{3\binom{n}{2}}\left(\sum_{u\in V(G)}\binom{d(u)}{2}\right)^2,
			\end{align*}
			where the last inequality holds since $\sum_{u\in V(G)} \binom{d(u)}{2}/\binom{n}{2} \ge n\binom{d}{2}/\binom{n}{2}>3$ (using our assumption that $d>2\sqrt{n}$).
			Therefore,
			\[
				q \ge \frac{\sum_{u\in V(G)} \binom{d(u)}{2}}{3\binom{n}{2}} \ge \frac{n \binom{d}{2}}{3\binom{n}{2}} \ge \frac{d^2}{4n},
			\]
			where the second inequality follows from the assumption that $d(u) \ge d $ for every $u \in V(G)$, and the last inequality holds since $d > 2\sqrt{n}$. This finishes the proof.
		\end{proof}

	\subsection{Graphs with polynomially-large minimum degree} \label{sec:poly-large}

		In this subsection we prove that \cref{conj:log} holds for graphs with minimum degree at least $n^{\Omega(1)}$, which also proves the lower bound in \cref{thm:refined-degree}~\eqref{itm:sharp-medium-deg}. The following lemma appeared as~\cite[Theorem~3.4]{EKT}, and is a corollary of Johansson's theorem~\cite{Joh} for colouring triangle-free graphs and a connection between the fractional chromatic number and dense bipartite induced subgraphs (see \cite[Theorem 3.1]{EKT} and a discussion in \cref{sec:conclusion}).

		\begin{lem} \label{lem:log-regular}
			There is a constant $c>0$ such that any triangle-free graph with minimum degree $d$ and maximum degree $\Delta$ contains a bipartite induced subgraph of average degree at least $\frac{cd}{\Delta}\log \Delta$.
		\end{lem}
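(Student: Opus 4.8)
The plan is to combine two facts. The first, used purely as a black box, is Johansson's theorem~\cite{Joh}: every triangle-free graph with maximum degree $\Delta$ has chromatic number $O(\Delta/\log \Delta)$, and hence fractional chromatic number $O(\Delta/\log \Delta)$ as well. The second is a purely combinatorial principle, valid for \emph{every} graph $G$ on $n$ vertices: if $\chi_f(G) \le k$, then $G$ contains an induced bipartite subgraph of average degree at least $\tfrac{2e(G)}{nk}$ up to a $1-o(1)$ factor, and at least $\tfrac{e(G)}{nk}$ if one wants a clean strict bound; in particular, of average degree $\ge \tfrac{d}{2k}$ whenever $G$ has minimum degree at least $d$. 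Granting this and taking $k = O(\Delta/\log \Delta)$ from Johansson's theorem produces an induced bipartite subgraph of average degree $\ge \tfrac{d}{2k} = \Omega\!\left(\tfrac{d\log\Delta}{\Delta}\right)$, which is exactly the conclusion of the lemma (for $\Delta$ bounded the statement is trivial after choosing $c$ small enough).

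To prove the combinatorial principle, I would run a one-line first-moment argument. Fix a fractional colouring witnessing $\chi_f(G) \le k$; a routine thinning argument lets us take it \emph{balanced}, i.e.\ a probability distribution $\mathcal I$ over the independent sets of $G$ with $\Pr_{I \sim \mathcal I}[v \in I] = 1/k$ for every vertex $v$ (the uniform distribution over the colour classes of a proper $k$-colouring is already of this form). Sample $I_1, I_2$ independently from $\mathcal I$ and set $A := I_1$ and $B := I_2 \setminus I_1$, two disjoint independent sets, so that $G[A \cup B]$ is an induced bipartite subgraph. Then $\Ex[|A \cup B|] \le \Ex[|I_1|] + \Ex[|I_2|] = 2n/k$. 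Moreover, for a fixed edge $uv$ of $G$, no independent set contains both $u$ and $v$, so $uv$ joins $A$ to $B$ precisely when $u \in I_1, v\in I_2$ or when $v \in I_1, u\in I_2$ (in each case the extra condition defining $B$ is then automatic); these are disjoint events of probability $1/k^2$ each, whence $\Ex[e_G(A,B)] = 2e(G)/k^2$. By linearity of expectation, for every $\lambda < e(G)/(nk)$ we have $\Ex\big[e_G(A,B) - \lambda|A\cup B|\big] > 0$, so some outcome gives an induced bipartite subgraph with more than $\lambda|A\cup B|$ edges, i.e.\ of average degree more than $2\lambda$; letting $\lambda \uparrow e(G)/(nk)$ (or just fixing $\lambda = e(G)/(2nk)$) completes the argument.

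I do not expect a genuine obstacle: the depth lies entirely in Johansson's theorem, which is quoted, and the rest is elementary. The two points that need a little care are the edge count --- where one genuinely uses that $G$ has no edge inside an independent set, both to see that the two favourable events are disjoint and to see that passing from $I_2$ to $I_2\setminus I_1$ loses nothing --- and the insistence that $\mathcal I$ be balanced: a lopsided distribution over independent sets could inflate $\Ex[|A\cup B|]$ and wreck the bound, which is exactly why the natural parameter here is the fractional chromatic number. Finally, note that triangle-freeness is used only through Johansson's theorem; the combinatorial principle itself holds for all graphs, with $\Delta/\log\Delta$ replaced by $\chi_f(G)$.
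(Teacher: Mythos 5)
Your proposal is correct and follows essentially the same route as the paper, which deduces the lemma from Johansson's theorem \cite{Joh} together with the result of Esperet, Kang and Thomass\'e \cite{EKT} that a graph with minimum degree $d$ and fractional chromatic number at most $k$ contains a bipartite induced subgraph of average degree at least $d/k$. The only difference is that you reprove this second ingredient via a correct two-sample first-moment argument (with an immaterial factor-$2$ loss) instead of citing it.
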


		Every graph with average degree $d$ has an induced subgraph with minimum degree at least $d/2$, so one can deduce a version of \cref{lem:log-regular} which guarantees a bipartite induced subgraph with \emph{minimum} degree at least $\frac{cd}{2\Delta}\log \Delta$.

		The other ingredient we will need is a theorem of Erd\H{o}s and Simonovits~\cite{ErdSim} (see also~\cite[Lemma~2.6]{JiaSei}).

		\begin{lem} \label{lem:regularization}
			Consider $\eps \in (0,1)$, and let $n$ be sufficiently large relative to $\eps$. Let $G$ be an $n$-vertex graph with $e(G) \ge n^{1+\eps}$. Then $G$ contains an induced subgraph $H$ on $m \ge n^{\eps(1-\eps)(1+\eps)}$ vertices such that $e(H) \ge \frac25 m^{1+\eps}$ and $\Delta(H)/\delta(H) \le 10 \cdot 2^{1/\eps^2+1}$.
		\end{lem}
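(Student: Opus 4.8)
This is the classical Erd\H os--Simonovits regularisation lemma; I would prove it in two stages, first normalising the minimum degree and then the maximum degree. \emph{Stage 1 (minimum degree).} Set $G_0 := G$, and as long as the current graph $G_i$ contains a vertex $v$ with $d_{G_i}(v) < e(G_i)/v(G_i)$ --- that is, of degree below half the current average degree --- delete such a $v$ to form $G_{i+1}$. A one-line computation ($n(e-d(v))>e(n-1)\iff d(v)<e/n$) shows that each such deletion \emph{strictly increases} the ratio $e(G_i)/v(G_i)$, so this ratio always stays at least its initial value $e(G)/n \ge n^{\eps}$. Hence when the process halts we obtain an induced subgraph $G' \subseteq G$ on $N := v(G')$ vertices with $\delta(G') \ge e(G')/N \ge n^{\eps} \ge N^{\eps}$ and $e(G') \ge N \cdot n^{\eps} \ge N^{1+\eps}$; moreover $e(G') \le \binom N2$ forces $N \ge 2n^{\eps}$. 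It therefore suffices to find the required $H$ \emph{inside $G'$} with $v(H) \ge N^{(1-\eps)(1+\eps)} = N^{1-\eps^2}$ and $e(H) \ge \tfrac25 e(G')$, since then $v(H) \ge (n^{\eps})^{1-\eps^2} = n^{\eps(1-\eps)(1+\eps)}$ and, as $v(H) \le N$, also $e(H) \ge \tfrac25 e(G') \ge \tfrac25 N^{1+\eps} \ge \tfrac25 v(H)^{1+\eps}$.

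\emph{Stage 2 (maximum degree).} The difficulty now is that $\Delta(G')$ may be as large as $N$, and one cannot simply delete the highest-degree vertices, as they might carry almost all of the edges (think of an unbalanced complete bipartite graph $K_{a,b}$ with $a \ll b$). Instead I would iterate a cleaning step about $1/\eps^2$ times, starting from $H_0 := G'$ and maintaining at each stage an induced subgraph $H_i$ for which $v(H_i)$, the quantity $e(H_i)/v(H_i)^{1+\eps}$, and the quantity $\delta(H_i)/v(H_i)^{\eps}$ are all suitably controlled. In round $i$: if $\Delta(H_i)/\delta(H_i) \le K := 10 \cdot 2^{1/\eps^2+1}$, stop and output $H_i$; otherwise the degrees of $H_i$ span many dyadic scales, and we run a win--win dichotomy on the top scales. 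If the vertices lying in the top scales are incident to only a negligible (here $o_\eps(1)$) fraction of the edges, delete them: this shrinks the spread of the degree sequence by a large factor while losing only a tiny fraction of the vertices and edges, after which one re-runs Stage 1 to restore the minimum-degree normalisation (which only helps the edge count). Otherwise the top scales carry a constant fraction of the edges; since each such vertex has degree at most $v(H_i)$, there are at least $\sim v(H_i)^{\eps}$ of them, and one passes instead to a denser subgraph supported on these high-degree vertices together with a comparably-sized piece of their neighbourhood. In both cases $H_{i+1}$ has controlled parameters, and bookkeeping the multiplicative losses shows that each round costs at most a $v(H_i)^{O(\eps^4)}$ factor in the number of vertices and a $1-O(\eps^2)$ factor in the number of edges, while making definite progress on $\Delta/\delta$; after $O(1/\eps^2)$ rounds the ratio is below $K$ yet $v(H) \ge N^{1-\eps^2}$ and $e(H) \ge \tfrac25 e(G')$ still hold. (This accounting is exactly what fixes the constants $1/\eps^2$ and $\tfrac25$ in the statement.)

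Stage 1 and the accounting in Stage 2 are routine. The real obstacle is the design of the Stage 2 cleaning step: one needs a single operation that is essentially lossless in both vertices and edges yet genuinely decreases irregularity, and in particular one must handle the `sparse but very irregular' regime exemplified by $K_{a,b}$ with $a \ll b$ --- where there is no small dense subgraph to descend into and the only route to regularity is to rebalance the two sides of the near-bipartite structure (in the spirit of K\H ov\'ari--S\'os--Tur\'an) rather than to delete vertices. Arranging that this step loses only a sub-polynomial factor in the number of vertices per round, so that $O(1/\eps^2)$ rounds still leave a polynomially large induced subgraph, is the delicate part.
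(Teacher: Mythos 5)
Your Stage 1 is the standard minimum-degree cleaning and is correct, but the reduction you draw from it is false, and this is where the proposal breaks. You claim it suffices to find $H$ inside $G'$ with $v(H) \ge N^{1-\eps^2}$ and $e(H) \ge \tfrac25 e(G')$. Consider $G = K_{a,n-a}$ with $a = \Theta(n^{\eps})$ chosen so that $e(G) \ge n^{1+\eps}$; every degree is at least $e(G)/n$, so Stage 1 deletes nothing, $N = n$ and $e(G') = \Theta(n^{1+\eps})$. Any induced subgraph $H$ with $\Delta(H) \le K\,\delta(H)$ is a complete bipartite graph $K_{a',b'}$ with $b' \le K a' \le K a$, hence has at most $(K+1)a = O(n^{\eps})$ vertices and at most $K a^2 = O(n^{2\eps})$ edges --- below $N^{1-\eps^2}$ vertices when $\eps$ is small, and below $\tfrac25 e(G')$ edges for every $\eps \in (0,1)$. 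So no cleaning procedure whatsoever can meet your Stage 2 targets, and the same example defeats your round-by-round accounting (losing only a $1-O(\eps^2)$ fraction of edges per round and retaining $\tfrac25 e(G')$ at the end). This is exactly why the lemma is phrased as it is: the edge guarantee is $\tfrac25 m^{1+\eps}$ measured against the possibly much smaller vertex count $m$ of $H$, and the vertex guarantee is only $n^{\Theta(\eps)}$. Notably, you invoke $K_{a,b}$ yourself as the hard case without noticing that it already refutes your stated reduction.

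Beyond this, Stage 2 is only a sketch: the dichotomy on the top dyadic scales and the bookkeeping that is supposed to produce the constants $1/\eps^2$ and $\tfrac25$ are not carried out, and you concede that designing the cleaning step is ``the delicate part'' --- but that step is the entire content of the lemma. For comparison, the paper does not prove this statement at all; it quotes it from Erd\H{o}s--Simonovits \cite{ErdSim} (see also \cite[Lemma~2.6]{JiaSei}). The classical argument never tries to keep a constant fraction of the ambient graph's edges: it tracks the functional $e(G_i)/v(G_i)^{1+\eps}$, showing that if the current graph has no large almost-regular dense subgraph then one can pass to an induced subgraph whose vertex count drops by at most a controlled power while this functional essentially doubles; since the functional is bounded by $v^{1-\eps}$, the iteration stops within roughly $1/\eps^2$ steps (whence $K = 10\cdot 2^{1/\eps^2+1}$), and the final graph is dense relative to its own size --- precisely the feature your reduction discards. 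To salvage your outline you would have to restate the invariants of each round relative to the current subgraph (its own $m^{1+\eps}$), not relative to $e(G')$, and allow the vertex count to shrink polynomially in each round.
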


		Now, consider $d=n^{\Omega(1)}$ and let $G$ be an $n$-vertex graph with minimum degree at least $d$. Then $G$ has $n^{1+\Omega(1)}$ edges, so we may apply \cref{lem:regularization} to deduce that it has an induced subgraph, all of whose degrees are in the range between $d'$ and $K d'$, for some $d' = n^{\Omega(1)}$ and some $K=O(1)$. By \cref{lem:log-regular}, inside this induced subgraph there is a bipartite induced subgraph with minimum degree at least $\frac{c}{2K}\log d'=\Omega(\log n)$, as desired.

\section{Reducing from $H$-free graphs to triangle-free graphs}\label{sec:reduction}

	In this section we show that in order to prove \cref{thm:log} it suffices to consider the case of triangle-free graphs.

	\begin{proof}[Proof of \cref{thm:log} under the assumption that it holds for $H = K_3$]
	
	    We note that it suffices to prove the theorem for $K_t$-free graphs, as every $H$-free graph is $K_{|H|}$-free.
        We proceed by induction. Fix $t>3$ and assume that for every $3\le s<t$, every $K_{s}$-free graph with minimum degree $q$ has an induced bipartite subgraph with
		minimum degree $\Omega\left(\log q/\log\log q\right)$.

		Let $G$ be a $K_t$-free graph on $n$ vertices with minimum degree at least $d$. As in \cref{sec:log}, we may assume that $G$ is $d$-degenerate, and fix an ordering of the vertices from left to right such that every vertex has at most $d$ neighbours to its right. Let $N^+(x)$ be the set of neighbours of a vertex $x$ to its right.

		Note that each $G\left[N^{+}\left(v\right)\right]$ is $K_{t-1}$-free (otherwise, appending $v$ to a copy of $K_{t-1}$ would give a copy of $K_{t}$ in $G$). We may assume that each $G\left[N^{+}\left(v\right)\right]$ has at most $d^{7/6}$ edges. Indeed, otherwise $G\left[N^{+}\left(v\right)\right]$ would have average degree at least $2d^{1/6}$, and therefore have an induced subgraph with minimum degree at least $d^{1/6}$. By the induction hypothesis it would therefore have an induced bipartite subgraph with minimum degree $\Omega\left(\log d^{1/6}/\log\log d^{1/6}\right)=\Omega\left(\log d/\log\log d\right)$.

		Now, let $U$ be a random subset of the vertices of $G$, containing each vertex with probability $p:=d^{-2/3}$, independently. Then, let $W\subseteq U$ be obtained by removing the leftmost vertex of every triangle in $G\left[U\right]$. From the definition of $W,$ we know that $G[W]$ is triangle-free. We wish to show that with positive probability $G\left[W\right]$ has at least $\left|W\right|d^{1/6}$ edges, which will imply that it has a subgraph with minimum degree at least $d^{1/6}$ and therefore has an induced bipartite subgraph with minimum degree $\Omega\left(\log d/\log\log d\right)$, by the induction hypothesis.

		Let $X_{1}$ be the number of edges in $G\left[U\right]$, let $X_{2}$ be the number of triangles in $G\left[U\right]$ and let $X_{3}$ be the number of edges $e$ in $G\left[U\right]$ which are incident to the leftmost vertex of a triangle in $G\left[U\right]$ (such that $e$ is not actually in the triangle). Then $G\left[W\right]$ has at least $X_{1}-2X_{2}-X_{3}$ edges.

		The number of edges in $G$ is at least $nd/2$ by the minimum degree assumption, and the number of triangles in $G$ is $\sum_{v\in V(G)}e\left(G\left[N^{+}\left(v\right)\right]\right) \le nd^{7/6}$ (counting triangles by their leftmost vertex). The number of edge-triangle pairs $(e,T)$ such that $e$ is not in $T$ and $e$ is incident to the leftmost vertex of $T$ is at most
		\[
			\sum_{v\in V(G)}d(v)e(G[N^{+}(v)]) \le \sum_{v\in V(G)}d(v)d^{7/6}=2e(G)d^{7/6} \le 2nd^{13/6}.
		\]
		Therefore
		\[
			\E\left|U\right|=np,\quad\E X_{1}\ge \tfrac12 ndp^{2},\quad\E X_{2}\le nd^{7/6}p^{3},\quad\E X_{3}\le 2nd^{13/6}p^{4},
		\]
		and
		\begin{align*}
		\E\left[\left(X_{1}-2X_{2}-X_{3}\right)-d^{1/6}\left|U\right|\right] & \ge \tfrac12 ndp^{2}-2nd^{7/6}p^{3}-2nd^{13/6}p^{4}-d^{1/6}np\\
		 & =n\left(\tfrac12 d^{-1/3}-2d^{-5/6}-2d^{-1/2}-d^{-1/2}\right)>0
		\end{align*}
		for sufficiently large $d$. It follows that there is an outcome of $U$ with $\left(X_{1}-2X_{2}-X_{3}\right)-d^{1/6}\left|U\right|>0$. Then, $G\left[W\right]$ is an induced subgraph with at least $X_{1}-2X_{2}-X_{3}> d^{1/6}\left|U\right|\ge d^{1/6}\left|W\right|$ edges, as desired.
	\end{proof}

\section{Upper bounds on minimum degree}\label{sec:upper}
	In this section we describe both probabilistic and explicit constructions of triangle-free graphs which have no high-degree bipartite induced subgraphs. These will be used for the proofs of the upper bounds in \cref{thm:refined-degree}. To be specific, the upper bounds for \eqref{itm:medium-deg}-\eqref{itm:sharp-medium-deg} are proved in \cref{sec:probabilistic}, and the upper bound for \eqref{itm:shart-large-deg} is proved in \cref{sec:explicit}.

	\subsection{Probabilistic construction}\label{sec:probabilistic}

		In this subsection we prove the upper bounds in \cref{thm:refined-degree}~\eqref{itm:medium-deg}-\eqref{itm:sharp-medium-deg}. This essentially boils down to the following fact.

		\begin{lem} \label{lem:prob-construction}
			For any $n$, there exists a triangle-free graph $G_n$ on between $n/2$ and $n$ vertices such that every vertex of $G_n$ has degree $\Theta(\sqrt{n})$, and every induced bipartite subgraph of $G$ has minimum degree at most $O\left(\log n\right)$.
		\end{lem}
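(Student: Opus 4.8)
The plan is to take $G_n$ to be obtained from a binomial random graph by deleting every edge that lies in a triangle. Fix a small absolute constant $\delta>0$, set $p:=\delta/\sqrt n$, let $\Gamma\sim G(n,p)$, and let $G_n$ have vertex set $V(\Gamma)$ and edge set $\{e\in E(\Gamma):e\text{ lies in no triangle of }\Gamma\}$. Then $G_n$ is triangle-free, since a triangle of $G_n$ would be a triangle of $\Gamma$ all three of whose edges were deleted. (If a few stray low-degree or isolated vertices get created one deletes them, which is why we only claim between $n/2$ and $n$ vertices; one can also take disjoint copies of smaller such graphs if an exact count is wanted.)

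For the degree condition, a routine Chernoff-and-union-bound argument shows that w.h.p.\ every vertex of $\Gamma$ has degree $(1\pm o(1))np=\Theta(\sqrt n)$, and, conditioning on its degree being $O(\sqrt n)$, lies in at most $O(\delta^{3}\sqrt n)$ triangles of $\Gamma$ (the number of triangles through $v$ is the number of $\Gamma$-edges inside $N_\Gamma(v)$, a binomial count; here we use $\sqrt n=\omega(\log n)$ so the union bound over vertices succeeds). Deleting the triangle-edges therefore removes $O(\delta^{3}\sqrt n)$ edges at each vertex, which is below $\tfrac12 np$ once $\delta$ is small; so w.h.p.\ every vertex of $G_n$ still has degree $\Theta(\sqrt n)$.

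It remains to show that w.h.p.\ $G_n$ has no induced bipartite subgraph of minimum degree exceeding $C\log n$ for a suitable constant $C$. Suppose $H=G_n[A\cup B]$ is bipartite with parts $A,B$ and $\delta(H)\ge t$. First I claim that w.h.p.\ $\alpha(G_n)\le\alpha_0:=C'\sqrt n\log n$ for an appropriate constant $C'$; granting this, since $A$ and $B$ are independent sets in $G_n$ we get $|A|,|B|\le\alpha_0$. Next, as $G_n\subseteq\Gamma$ and $\delta(H)\ge t$ we have $e_\Gamma(A,B)\ge e_{G_n}(A,B)\ge t\max(|A|,|B|)$, so it is enough to show that w.h.p.\ no disjoint sets $A,B$ with $|A|,|B|\le\alpha_0$ satisfy $e_\Gamma(A,B)\ge t\max(|A|,|B|)$. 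Writing $a=|A|\le b=|B|$ (so $a\ge t$, hence $ta\le tb\le ab$), the expected number of such pairs is at most
\[
\sum_{a\le b\le\alpha_0}\binom na\binom nb\,\Pr\!\big[\Bin(ab,p)\ge tb\big]
\;\le\;\sum_{a\le b\le\alpha_0}n^{2b}\Big(\tfrac{eap}{t}\Big)^{tb};
\]
since $\alpha_0 p=C'\delta\log n=O(\log n)$ we have $eap/t\le\tfrac12$ once $t\ge C\log n$ with $C$ large, and then the sum is at most $\sum_{b\ge1}b\,n^{(2-C)b}\to0$. Hence $t<C\log n$. Note this step treats all admissible part sizes, including small ones, uniformly.

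The main obstacle is the claim $\alpha(G_n)\le\alpha_0$, i.e.\ that deleting the triangle-edges does not enlarge the independence number beyond $O(\sqrt n\log n)$. A set $I$ is independent in $G_n$ precisely when every edge of $\Gamma[I]$ lies in a triangle of $\Gamma$. The heuristic is that for $|I|=\alpha_0$ the graph $\Gamma[I]$ typically has $\Theta(\alpha_0^2 p)=\Theta(\sqrt n\log^2 n)$ edges, while a fixed edge $\{a,a'\}\subseteq I$ lies in a $\Gamma$-triangle with probability only $\approx 1-e^{-np^2}=O(\delta^2)$, so for $\delta$ small a constant fraction of the edges of $\Gamma[I]$ survive into $G_n$ and $I$ is not independent. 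To make this rigorous one reveals $\Gamma[I]$ first and the edges leaving $I$ afterwards: conditionally on $\Gamma[I]$, the number of edges of $\Gamma[I]$ acquiring a common neighbour outside $I$ is a sum of independent contributions over the vertices of $V\setminus I$ with conditional mean an $O(\delta^2)$-fraction of $e_\Gamma(I)$, so it concentrates, while the number of edges of $\Gamma[I]$ lying in a triangle internal to $I$ is a lower-order term with very high probability once one first conditions on the cheaply-established global events that $\Gamma$ has bounded clique number and codegree $O(\log n/\log\log n)$ (so $\Gamma$ has no unusually dense small subgraph). The delicate part is quantitative: one needs the failure probability for a fixed $I$ to be exponentially small in $e_\Gamma(I)=\Theta(\sqrt n\log^2 n)$, so that — taking $C'$ a large enough constant while $\delta$ stays small — it survives the union bound $\binom n{\alpha_0}=e^{O(\sqrt n\log^2 n)}$ over all sets $I$ of size $\alpha_0$; controlling the triangles internal to $I$ strongly enough, using truncation and the preliminary conditioning on the local structure of $\Gamma$, is where the real work lies.
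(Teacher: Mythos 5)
Your overall strategy (make a $G(n,p)$ with $p=\Theta(n^{-1/2})$ triangle-free by deleting edges, then verify degrees, independence number and a bipartite discrepancy bound) is the same as the paper's, and your degree and $e(A,B)$ calculations are fine. But there is a genuine gap at exactly the point you flag yourself: the claim $\alpha(G_n)=O(\sqrt n\log n)$. Because you delete \emph{every} edge lying in a triangle, ruling out an independent set $I$ of size $\alpha_0=\Theta(\sqrt n\log n)$ requires showing that not all of the $\Theta(\sqrt n\log^2 n)$ edges of $\Gamma[I]$ are triangle-edges, with failure probability $e^{-\Omega(\sqrt n\log^2 n)}$ for each fixed $I$, uniformly enough to survive the union bound over $\binom{n}{\alpha_0}=e^{\Theta(\sqrt n\log^2 n)}$ sets. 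Your sketch does not deliver this. The sum over $w\notin I$ of edges of $\Gamma[I]$ covered by $w$ has unbounded summands (an adversarially chosen $I$ can make $\Gamma[I]$ have vertices of degree $\Theta(\sqrt n)$, since $I$ may contain whole neighbourhoods), so the asserted concentration at the exponential-in-$\sqrt n\log^2 n$ scale is not routine; and the internal-triangle term is not controlled by conditioning on bounded clique number and codegree $O(\log n/\log\log n)$ — a codegree bound only gives (number of internal triangles) $\le$ (codegree)$\cdot e(\Gamma[I])$, which is vacuous, and what you actually need is a statement of the form ``no $\alpha_0$-set spans more than $\eps\sqrt n\log^2 n$ triangle-edges'', which is itself an upper-tail estimate for triangle counts over all sets and needs a real proof. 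So the central lemma of your construction is asserted rather than proved.

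The paper avoids this difficulty by deleting less: following Krivelevich, it removes only the edges of a \emph{maximal family of edge-disjoint triangles}. Then, for any set $S$ of size $k=\Theta(\sqrt n\log n)$, the number of deleted edges inside $S$ is at most $3Z_S$, where $Z_S$ is the maximum number of pairwise edge-disjoint triangles with at least two vertices in $S$, and the inequality $\Pr(Z_S\ge 5\,\Ex Y_S)\le e^{-\Ex Y_S}$ (\cref{lem:Krivelevich}~\eqref{itm:krivelevich}), with $\Ex Y_S=\Theta\big(n\binom{k}{2}p^3\big)=\Theta(\sqrt n\log^2 n)$, gives exactly the exponentially small failure probability needed to beat the union bound; combined with a Chernoff lower bound on $e_\Gamma(S)$ this shows every such $S$ retains edges, so $\alpha(H)\le k$, and the same counts give the edge and degree conditions. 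If you want to keep your ``delete all triangle-edges'' variant, you would need to supply a genuinely new upper-tail argument (e.g.\ a uniform bound on triangle-edges spanned by $\alpha_0$-sets plus a careful MGF bound for the covered-from-outside count); as written, replacing your deletion rule by the edge-disjoint one and invoking Krivelevich's inequality is the efficient repair.
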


		\begin{proof}[Proof of the upper bounds in \cref{thm:refined-degree}~\eqref{itm:medium-deg}-\eqref{itm:sharp-medium-deg} given \cref{lem:prob-construction}]
			Let $G_n$ be a graph as in the statement of \cref{lem:prob-construction}. By blowing up each of the vertices into an independent set of size $1$ or $2$ (as in the proof of \cref{thm:ManyEdges}), in such a way that the total number of verties in the blown-up graph is $n$, we obtain a graph on $n$ vertices, with minimum degree $\Theta(\sqrt{n})$, whose every induced bipartite subgraph has minimum degree $O\left(\log n\right)$. By replacing $G_n$ with the blown-up graph, we may thus assume that $G_n$ itself has exactly $n$ vertices.

			Let $\delta(G_{n})=\Theta(\sqrt n)$ be the minimum degree of the graph $G_n$. We divide our analysis into two cases: $\delta(G_{n}) \le d \le n/2$, and $d \le \delta(G_n)$.

			Suppose that $\delta(G_{n}) \le d \le n/2$. We will show that $g(n,d)=O((d^2\log d)/n)$ by taking blowups as in the proof of \cref{thm:ManyEdges}. Let $n'$ be maximal such that $n'\le n$ and $\floor{n/n'} \delta(G_{n'})\ge d$. (There exists such an $n'$ because $n' = 2$ satisfies the required inequalities.) We claim that $n' = \Theta( (n/d)^2)$. Indeed, recall that $\delta(G_{n'}) = \Theta(\sqrt{n'})$. We thus have $d = O(n / \sqrt{n'})$, and so $n' = O( (n/d)^2)$. For the lower bound, note that if $n'' = \floor{c (n/d)^2}$, for a sufficiently small constant $c > 0$, then $\floor{n / n''} \delta(G_{n''}) \ge d$. It follows that $n' \ge n'' = \Omega((n/d)^2)$.
			Blow up the vertices of $G_{n'}$ into independent sets with almost-equal sizes (about $n/n'$) to obtain an $n$-vertex graph with minimum degree at least $d$, in which every induced bipartite subgraph has minimum degree at most $O((n/n')\log (n'))=O((d^2\log d)/n)$. This shows that $g(n,d)=O((d^2\log d)/n)$ in the regime where $\delta(G_n) \le d \le n/2$.

			Finally, we consider the case $d \le \delta(G_n)$. Let $n'$ be minimal such that $\delta(G_{n'}) \ge d$, so $n' = \Theta(d^2)$. Consider the graph which is a disjoint union of $\floor{n/n'}$ copies of $G_{n'}$. This graph has between $n/2$ and $n$ vertices, it has minimum degree at least $d$, and all of its induced bipartite subgraphs have minimum degree $O(\log n') = O(\log d)$ (because every induced bipartite subgraph is a disjoint union of induced bipartite subgraphs of the copies of $G_{n'}$). As above, by blowing up each vertex into an independent set of size $1$ or $2$, we can obtain a graph on exactly $n$ vertices, with minimum degree at least $d$, whose every induced bipartite subgraph has minimum degree $O(\log d)$. It follows that $g(n,d) = O(\log d)$ when $d \le \delta(G_n)$.
		\end{proof}

		To prove \cref{lem:prob-construction} we will follow the approach of Krivelevich~\cite{Kri95}. Our main tools are the following large deviation inequalities.

		\begin{lem} \label{lem:Krivelevich}
			The following hold.
			\begin{enumerate}[\rm (a)]
				\item \label{itm:chernoff}
					Let $n \in \NN$ and $p,\eps \in [0,1]$. Then $\Pr(|\Bin(n,p)-np| \ge \eps np) \le 2 \cdot e^{-\eps^2 np/3}$.
				\item \label{itm:krivelevich}
					Consider a finite set $\Gamma$. Let $\{X_i:i \in \Gamma\}$ be a set of independent random variables each supported on $\{0,1\}$, and let $\cF \subset 2^{\Gamma}$ be a collection of subsets of $\Gamma$. Given $F\in \cF$, write $X_F$ for the random variable $\prod_{i \in F}X_i$. Now, define $X=\sum_{F\in \cF}X_F$,
					and
					\[
						X_0=\max\{m: \enskip \exists \enskip \text{pairwise disjoint sets $F_1,\ldots,F_m\in \mathcal{F}$ such that $X_{F_1}=\ldots=X_{F_m}=1$}\}.
					\]
					Then we have $\Pr(X_0 \ge 5 \Ex X)  \le e^{-\Ex X}$. 
			\end{enumerate}
		\end{lem}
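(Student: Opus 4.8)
The plan is to handle the two parts separately: part~(a) is a routine Chernoff estimate, while part~(b) is the substantive claim (following the method of Krivelevich~\cite{Kri95}) and deserves the real attention.

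For part~(a), I would run the standard exponential-moment argument. Writing $S=\Bin(n,p)$ as a sum of independent Bernoulli$(p)$ variables, for $t>0$ we have $\Pr(S\ge(1+\eps)np)\le e^{-t(1+\eps)np}(1-p+pe^{t})^{n}\le\exp\!\big(np(e^{t}-1)-t(1+\eps)np\big)$; taking $t=\ln(1+\eps)$ gives $\Pr(S\ge(1+\eps)np)\le\exp\!\big(-np\,((1+\eps)\ln(1+\eps)-\eps)\big)\le e^{-\eps^{2}np/3}$, where the last step uses the elementary inequality $(1+\eps)\ln(1+\eps)-\eps\ge\eps^{2}/3$ valid for $\eps\in[0,1]$. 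The symmetric calculation yields $\Pr(S\le(1-\eps)np)\le e^{-\eps^{2}np/2}\le e^{-\eps^{2}np/3}$, and a union bound over the two tails gives the stated bound with the factor $2$. (Alternatively, one simply cites a standard reference.)

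For part~(b), the key structural point is that \emph{disjointness of the index sets forces independence of the corresponding events}. Set $\mu_{F}:=\Ex X_{F}=\Pr(X_{F}=1)$, so $\Ex X=\sum_{F\in\cF}\mu_{F}$. If $F_{1},\dots,F_{m}\in\cF$ are pairwise disjoint, then $X_{F_{1}},\dots,X_{F_{m}}$ depend on disjoint blocks of the independent family $\{X_{i}:i\in\Gamma\}$, hence are mutually independent, so $\Pr(X_{F_{1}}=\dots=X_{F_{m}}=1)=\prod_{j=1}^{m}\mu_{F_{j}}$. Now the event $\{X_{0}\ge m\}$ is, by definition, exactly the union over all $m$-element pairwise disjoint subfamilies $\{F_{1},\dots,F_{m}\}\subseteq\cF$ of the events $\{X_{F_{1}}=\dots=X_{F_{m}}=1\}$. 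Applying the union bound, and then discarding both the distinctness and the disjointness constraints (each unordered subfamily being counted $m!$ times among the ordered $m$-tuples that expand $(\sum_{F}\mu_{F})^{m}$), gives
\[
\Pr(X_{0}\ge m)\ \le\ \sum_{\substack{\{F_{1},\dots,F_{m}\}\subseteq\cF\\ \text{pairwise disjoint}}}\ \prod_{j=1}^{m}\mu_{F_{j}}\ \le\ \frac{1}{m!}\Big(\sum_{F\in\cF}\mu_{F}\Big)^{m}\ =\ \frac{(\Ex X)^{m}}{m!}.
\]

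It then remains to substitute $m:=\ceil{5\,\Ex X}$ (the statement being vacuous when $\Ex X=0$): using $m!\ge(m/e)^{m}$ and $\Ex X\le m/5$ we get $(\Ex X)^{m}/m!\le(e\,\Ex X/m)^{m}\le(e/5)^{m}$, and since $0<e/5<1$ and $m\ge5\,\Ex X$ this is at most $(e/5)^{5\,\Ex X}=e^{-5(\ln5-1)\,\Ex X}\le e^{-\Ex X}$, because $5(\ln5-1)>1$. Hence $\Pr(X_{0}\ge5\,\Ex X)=\Pr(X_{0}\ge m)\le e^{-\Ex X}$. There is no real obstacle here; the only points needing a little care are the counting step that converts the union bound into $(\Ex X)^{m}/m!$, and checking that the constant $5$ is comfortably large enough in the last line.
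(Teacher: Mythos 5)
Your proof is correct. Note that the paper does not actually prove this lemma: it simply cites \cite[Corollary 21.7]{FK16} for part (a) and \cite[Claim 1]{Kri95} for part (b). Your argument for (b) --- disjointness of the $F_j$ gives independence of the events $X_{F_j}=1$, a union bound over disjoint $m$-subfamilies gives $\Pr(X_0\ge m)\le (\Ex X)^m/m!$, and then $m=\ceil{5\Ex X}$ together with $m!\ge (m/e)^m$ and $5(\ln 5-1)>1$ closes the estimate --- is precisely the standard computation behind Krivelevich's Claim 1, so it serves as a valid self-contained substitute for the citation.
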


		Part \eqref{itm:chernoff} of \cref{lem:Krivelevich} is the well-known Chernoff bound (see, e.g., \cite[Corollary 21.7]{FK16}), while Part \eqref{itm:krivelevich} follows from~\cite[Claim~1]{Kri95}. We will use \cref{lem:Krivelevich} to prove the following statement about triangle-free subgraphs of a random graph.
		 
		\begin{lem}\label{lem:key}
			Consider the binomial random graph $G\sim G(n,p)$, with $p=cn^{-1/2}$ for some absolute constant $c \in (0,\tfrac{1}{20})$. Set $a=\max \{10^6c^{-1},12c^{-3},28\}$. Then w.h.p.\ $G$ contains a triangle-free subgraph $H$ satisfying the following properties.
			\begin{align}
				\Delta(H)& \le 1.01np, \label{eq:deg-I}\\
				e_H(A,B)&\le a|A|\log n \enskip \text{for every disjoint sets} \enskip A,B \subset V(G) \enskip
				\text{with} \enskip |A|=|B|\le a\sqrt{n}\log n, \label{eq:bipartite-I}\\
				e(H)&\ge 0.9 p \binom{n}{2},\label{eq:edges}\\
				\alpha(H)& \le a\sqrt{n} \log n.\label{eq:ind}
			\end{align}
		\end{lem}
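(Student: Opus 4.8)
The approach is to take $H$ to be a maximal triangle-free subgraph of $G \sim G(n,p)$ obtained by deleting edges to destroy triangles, and to verify the four properties hold w.h.p. First I would handle \eqref{eq:deg-I}: since $G \sim G(n,p)$ has $\Delta(G) \le 1.01np$ w.h.p. by the Chernoff bound (Lemma \ref{lem:Krivelevich}\eqref{itm:chernoff}) applied to each vertex degree $\Bin(n-1,p)$ and a union bound over $n$ vertices, and since $H$ is a subgraph of $G$, the bound on $\Delta(H)$ is automatic for any subgraph we choose. Next, for \eqref{eq:edges}, the point is that $G$ does not have too many triangles: $\Ex[\#\text{triangles}] = \binom{n}{3}p^3 = O(c^3 n^{3/2})$, and by Markov (or a second-moment/concentration argument) w.h.p.\ the number of triangles is $O(c^3 n^{3/2})$, which is much smaller than $e(G) \approx p\binom{n}{2} = \Theta(c n^{3/2})$ once we also know $e(G) \ge 0.95 p\binom n2$ w.h.p.\ by Chernoff on $\Bin(\binom n2, p)$. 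Deleting one edge per triangle therefore removes at most $o(p\binom n2)$ edges (here the constant $a \ge 12c^{-3}$ is presumably calibrated so that the triangle count is controlled), leaving $e(H) \ge 0.9 p\binom n2$.

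The substantive properties are \eqref{eq:bipartite-I} and \eqref{eq:ind}, and these are where Krivelevich's inequality (Lemma \ref{lem:Krivelevich}\eqref{itm:krivelevich}) enters. For the independence number bound \eqref{eq:ind}: a standard first-moment argument shows $\alpha(G(n,p)) = O(\log n / p) = O(\sqrt n \log n)$ w.h.p., and since $H \subseteq G$ we would need $\alpha(H)$ small, but $\alpha(H) \ge \alpha(G)$ in general — so the naive bound goes the wrong way. The resolution is that we cannot delete too many edges: after removing one edge per triangle we have kept almost all of $G$, and a set that is independent in $H$ spans only triangle-destroyed edges in $G$; bounding the number of such "surviving independent sets" via the deletion structure is exactly what part \eqref{itm:krivelevich} is designed for, with $\Gamma = E(G)$ (or the vertex set), $\cF$ indexing triangles, and $X_0$ counting a maximal packing of edge-disjoint triangles. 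For \eqref{eq:bipartite-I}, we want that for all disjoint $A,B$ of equal size $\le a\sqrt n \log n$, the number of $H$-edges between them is $\le a|A|\log n$. In $G$ itself, $e_G(A,B) \sim \Bin(|A||B|, p)$ has mean $p|A||B| \le a^2 n (\log n)^2 \cdot cn^{-1/2} = a^2 c \sqrt n (\log n)^2$, which for the largest $|A|$ is of the right order $a|A|\log n = a^2 \sqrt n (\log n)^2$ up to the factor $c < 1/20$; a Chernoff bound plus a union bound over the $\le \binom{n}{a\sqrt n \log n}^2 \le e^{O(\sqrt n (\log n)^2)}$ choices of pairs $(A,B)$ gives the claim, provided the exponent in the Chernoff tail beats the union-bound entropy — this is where $a \ge 10^6 c^{-1}$ is used to make the failure probability $e^{-\Omega(a c |A||B|p)}$ dominate $e^{O(|A|\log n)}$.

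The main obstacle is property \eqref{eq:ind} (and the analogous subtlety in \eqref{eq:bipartite-I} if we want them for $H$ rather than $G$): passing to the triangle-free subgraph $H$ only \emph{increases} independence numbers and can only decrease edge counts between sets, so the easy union-bound estimates on $G$ do not transfer directly. The fix is to argue that the edge-deletion is sparse in a structured way — the deleted edges can be covered by few vertices or lie in a bounded-size triangle packing — and feed this into Lemma \ref{lem:Krivelevich}\eqref{itm:krivelevich} (following Krivelevich \cite{Kri95}), so that any large set which became independent or lost all its cross-edges in $H$ must already have had very few edges in $G$, an event we can union-bound away. I would carry out the four verifications in the order \eqref{eq:deg-I}, \eqref{eq:edges}, \eqref{eq:bipartite-I}, \eqref{eq:ind}, defining $H$ explicitly as $G$ minus a minimal set of triangle-hitting edges (equivalently, minus the "leftmost" edge of each triangle under a fixed edge-ordering, as in \cref{sec:reduction}), and then intersecting the four w.h.p.\ events.
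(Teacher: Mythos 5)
Your handling of \eqref{eq:deg-I}, \eqref{eq:edges} and \eqref{eq:bipartite-I} is essentially sound (for \eqref{eq:bipartite-I} your worry about transferring from $G$ to $H$ is unnecessary, since it is an upper bound on cross-edges and hence monotone under passing to subgraphs; also, Markov alone gives only constant probability for the triangle count, so you do need the second-moment step you mention in passing). The genuine gap is at \eqref{eq:ind}, and it comes from your choice of $H$. You define $H$ by deleting one edge per triangle (a minimal triangle-hitting set, or the leftmost edge of each triangle). Under such a rule, the number of edges deleted inside a fixed set $S$ of size $k=\lfloor a\sqrt n\log n\rfloor$ is controlled only by $Y_S$, the number of triangles with at least two vertices in $S$ (e.g.\ in a ``book'' of triangles through one common edge, each triangle may have a different chosen edge), and \emph{not} by the edge-disjoint packing number $Z_S$. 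So to rule out a $k$-set becoming independent in $H$ you would need, uniformly over the $n^{k}$ sets $S$, an upper-tail bound of the shape $\Pr\left(Y_S\ge e_G(S)\right)\le e^{-\Omega(k\log n)}$; this is an instance of the (hard) upper-tail problem for triangle-type counts and is not what \cref{lem:Krivelevich}~\eqref{itm:krivelevich} provides. You correctly name Krivelevich's inequality and even the packing statistic $X_0$, but you never connect it to your deletion rule, and with your $H$ it cannot be connected: the inequality bounds packings, while your construction can delete $Y_S$-many edges locally.

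The paper's construction is designed precisely to close this gap: let $\cT$ be a \emph{maximal family of pairwise edge-disjoint triangles} of $G$ and let $H$ be $G$ minus \emph{all} edges of triangles in $\cT$ (maximality gives triangle-freeness). Then the triangles of $\cT$ that touch $G[S]$ in an edge have at least two vertices in $S$ and are edge-disjoint, so at most $3Z_S$ edges are deleted inside $S$, and it suffices to prove $e_G(S)\ge 34\,Z_S$ for all $k$-sets, which is \eqref{eq:key}. This follows from the Chernoff lower tail $e_G(S)\ge 0.99\,p\binom{k}{2}$ for all $S$ (this is where $a\ge 10^6c^{-1}$ is used, not in \eqref{eq:bipartite-I}, where $a\ge 28$ suffices), together with \cref{lem:Krivelevich}~\eqref{itm:krivelevich} applied to triangles with at least two vertices in $S$: since $\E[e_G(S)]/\E[Y_S]\ge 400$ for $p\le \tfrac1{20}n^{-1/2}$ and $\E[Y_S]=\Theta\left(n\binom{k}{2}p^3\right)\gg k\log n$ when $a\ge 12c^{-3}$, the bound $\Pr(Z_S\ge 5\E[Y_S])\le e^{-\E[Y_S]}$ survives the union bound over $n^{k}$ sets. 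As a bonus, averaging $e_H(S)\ge 0.9\,p\binom{k}{2}$ over all $k$-sets gives \eqref{eq:edges} directly, alongside $\alpha(H)\le k$. So the missing idea is not the tool but the matching of the deletion scheme to the tool: delete whole triangles of a maximal edge-disjoint packing, so that local deletions are bounded by $3Z_S$ rather than by $Y_S$.
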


		Before proving \cref{lem:key}, we show how to deduce \cref{lem:prob-construction} from it.

		\begin{proof}[Proof of \cref{lem:prob-construction}]
			Let $H$ be a triangle-free graph which satisfies \eqref{eq:deg-I}--\eqref{eq:ind}. Write $H'$ for the graph obtained from $H$ by iteratively removing vertices of degree at most $pn/30$. Obviously $H'$ is triangle-free. We have $e(H') \ge e(H)-pn^2/30\ge 0.83p\binom{n}{2}$ by \eqref{eq:edges}. Combining this with \eqref{eq:deg-I} we deduce that $|H'|\in [0.8n,n]$, and each vertex of $H'$ has degree $\Theta(pn)=\Theta(\sqrt{n})$. Moreover, from \eqref{eq:bipartite-I} and \eqref{eq:ind} we learn that every induced bipartite subgraph of $H'$ has minimum degree at most $O(\log n)$.
		\end{proof}

		\begin{proof}[Proof of \cref{lem:key}]
			We follow the method of Krivelevich in~\cite{Kri95}. Let $\cT$ be any maximal family of edge-disjoint triangles in $G$. Let $H$ be the triangle-free graph obtained from $G$ by removing all edges in $\cT$. We will use \cref{lem:Krivelevich} to show that w.h.p.\ $H$ satisfies \eqref{eq:deg-I}--\eqref{eq:ind}. For ease of notation, set 
			$k=\floor{a\sqrt{n}\log n}$.

			As $d_G(v)\sim \Bin(n-1,p)$ for every $v \in V(G)$, using \cref{lem:Krivelevich} (a) and the union bound we get 
			\[
				\Pr\left(\exists v \in V(G) \text{ with } d_G(v) \ge 1.01np\right) 
				\le n \cdot 2e^{-\Omega(np)}=n \cdot e^{-\Omega(\sqrt{n})}=o(1),
			\]
			Thus w.h.p.\ $d_G(v) \le 1.01np$ for every $v \in V(G)$. This implies that \eqref{eq:deg-I} holds w.h.p. 

			Since $e_G(A,B) \sim \Bin(|A||B|,p)$ for every pair of disjoint sets $A, B \subseteq V(G)$, applying \cref{lem:Krivelevich} (a) and the union bound we find that
			\begin{align*}
				& \Pr\left(\exists \enskip \text{disjoint vertex sets $A,B$ with $|A|=|B| \le k$ and $e_G(A,B) \ge a|A|\log n$}\right)\\ 
				& \qquad \le \sum_{\ell=1}^{k} n^{2\ell} \cdot \Pr\left(\Bin(\ell^2,p) \ge a\ell\log n\right)\\
				& \qquad \le \sum_{\ell=1}^{k} n^{2\ell} \cdot \Pr\left(\Bin\left(\floor{\tfrac12 ap^{-1}\ell\log n},p\right) \ge a\ell\log n\right)\\
				& \qquad \le \sum_{\ell=1}^{k} n^{2\ell} \cdot 2 e^{-(a\ell\log n)/7} \le 2\sum_{\ell=1}^{k}n^{-2\ell}=o(1). 
			\end{align*}
			The second inequality holds since $\ell^2 \le \tfrac12 ap^{-1}\ell\log n$ for $\ell \le a\sqrt{n}\log n$ and $p \le \frac{1}{20}n^{-1/2}$, while the last inequality follows from the assumption that $a \ge 28$. We have proved that  \eqref{eq:bipartite-I} holds w.h.p.
			It remains to consider \eqref{eq:edges} and \eqref{eq:ind}.

			For every subset $S\subset V(G)$ of size $k$, denote by $Y_S$ the number of triangles that have at least two vertices in $S$, and by $Z_S$ the maximum number of pairwise edge-disjoint triangles with at least two vertices in $S$. Clearly $Z_S \le Y_S$.  
			We claim that w.h.p.
			\begin{equation}\label{eq:key}
			e_G(S)\ge 34\,Z_ S \text{ for every subset $S\subset V(G)$ of size $k$.}
			\end{equation}

			Before proving \eqref{eq:key}, we will show how it implies \eqref{eq:edges} and \eqref{eq:ind}. Since $e_G(S) \sim \Bin(\binom{k}{2},p)$ for every subset $S\subset V(G)$ of size $k$, using \cref{lem:Krivelevich}~\eqref{itm:chernoff} and the union bound we get that
			\[
				\Pr\left(\exists \enskip \text{a vertex set $S$ of size $k$ with $e_G(S) \le 0.99 p \binom{k}{2}$}\right) \le n^k\cdot 2 e^{-10^{-5}p\binom{k}{2}} \le 2n^{-k}=o(1).
			\]
			In the second inequality we used the facts that $p=cn^{-1/2},$ $k=\floor{a\sqrt{n}\log n}$ and $a \ge 10^6 c^{-1}$. Hence w.h.p.\ every size-$k$ vertex set $S$ satisfies
			\begin{equation}\label{eq:edge-S}
				e_G(S) \ge 0.99p\binom{k}{2}.
			\end{equation}
			Now, from the definition of $H$ and $Z_S$ we see that w.h.p.\ every size-$k$ subset $S \subset V(G)$ spans at least 
			\begin{equation*}
				e_G(S)-3Z_S\overset{\eqref{eq:key}}{>}0.91 e_G(S) \overset{\eqref{eq:edge-S}}{>} 0.9 p\binom{k}{2}
			\end{equation*}
			edges in $H$. It follows that w.h.p.\ $\alpha(H) \le k$ and $e(H)\ge 0.9 p \binom{n}{2}$, as required.

			We now return to the proof of \eqref{eq:key}. Given a size-$k$ vertex set $S$, we will bound $\Pr(e_G(S)< 34 Z_S)$ using \cref{lem:Krivelevich}~\eqref{itm:krivelevich}. 
			Note that 
			$\frac{\Ex [e_G(S)]}{\Ex Y_S} \ge \frac{p\binom{k}{2}}{n\binom{k}{2}p^3} \ge 400$,
			assuming $p \le \frac{1}{20}n^{-1/2}$. 
			Hence 
			\begin{align*}
				\Pr(e_G(S) < 34 Z_S) & \le \Pr(e_G(S) \le \Ex [e_G(S)]/2)+\Pr(34Z_S \ge \Ex [e_G(S)]/2)\\
				& \le \Pr(e_G(S) \le \Ex [e_G(S)]/2)+\Pr(Z_S \ge 5\Ex Y_S).
			\end{align*}
			Since $e_G(S) \sim \Bin(\binom{k}{2},p)$, applying \cref{lem:Krivelevich}~\eqref{itm:chernoff} we get that $\Pr(e_G(S) \le \Ex [e_G(S)]/2) \le 2 e^{-p\binom{k}{2}/12}$. Moreover, \cref{lem:Krivelevich}~\eqref{itm:krivelevich} implies that $\Pr(Z_S \ge 5\Ex Y_S) \le e^{-\Ex Y_S}= e^{-(1+o(1))n\binom{k}{2}p^3}$.
			Therefore, using the facts $p=cn^{-1/2} \le \frac{1}{20}n^{-1/2}$ and $k=\omega(1)$ we obtain 
			\[
				\Pr(e_G(S) < 34 Z_S) \le 2e^{-p\binom{k}{2}/12}+e^{-(1+o(1))n\binom{k}{2}p^3} \le 3 e^{-\frac{c^2}{3}pk^2}.
			\]
			With the union bound, we deduce that
			\[
				\Pr(\text{\eqref{eq:key} does not hold}) \le n^{k}\cdot 3e^{-\frac{c^2}{3}pk^2} \le 3e^{k \log n-\frac{c^2}{3}pk^2}\le  3e^{-k \log n}=o(1),
			\]
			where the last inequality holds since $p=cn^{-1/2}$, $k=\floor{a\sqrt{n}\log n}$ and $a\ge 12c^{-3}$. This finishes our proof of \cref{lem:key}.
		\end{proof}

	\subsection{Explicit construction}\label{sec:explicit}

		In this section we prove the upper bound in \cref{thm:refined-degree}~\eqref{itm:shart-large-deg}, by blowing up a sequence of triangle-free quasirandom graphs constructed by Alon~\cite{Alo94}. A graph $G$ is called an $(n,d,\lambda)$-graph if it is $d$-regular, has $n$ vertices, and all eigenvalues of its adjacency matrix, save the largest, are smaller in absolute value than $\lambda$. Alon's construction implies the following lemma.

		\begin{lem} \label{lem:alon}
			For every integer $n$ of the form $n=2^{3k}$ for some $k\in \mathbb{N}$, one can explicitly construct a triangle-free $(n,d,\lambda)$-graph $G_{n}$ with $d=2^{2k-2}-2^{k-1}=\Theta(n^{2/3})$ and $\lambda=O(n^{1/3})$.
		\end{lem}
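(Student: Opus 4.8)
The statement is essentially a repackaging of Alon's construction from \cite{Alo94}, so the plan is to recall that construction and verify that its parameters specialise as claimed. Write $q = 2^{k}$ and identify the additive group $\mathbb{F}_{2}^{3k}$ with $\mathbb{F}_{q}^{3}$ by fixing an $\mathbb{F}_{2}$-basis of $\mathbb{F}_{q}$. Then $G_{n}$ is a Cayley graph $\operatorname{Cay}(\mathbb{F}_{q}^{3}, D)$ on $n = q^{3} = 2^{3k}$ vertices, whose connection set has the shape $D = \{(x, y, f(x,y)) : (x,y) \in P\}$ for an explicit low-degree polynomial $f$ over $\mathbb{F}_{q}$ and an explicit set $P \subseteq \{(x,y) \in \mathbb{F}_{q}^{2} : \operatorname{Tr}(x) = 1\}$ with $|P| = 2^{2k-2} - 2^{k-1}$. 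Since $D = -D$ and $0 \notin D$ (as $\operatorname{Tr}(0) = 0 \neq 1$), the graph is a well-defined $|D|$-regular graph, so $d = |D| = 2^{2k-2} - 2^{k-1} = \tfrac{1}{4}\big(n^{2/3} - 2 n^{1/3}\big) = \Theta(n^{2/3})$.

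Triangle-freeness falls out of the choice of $P$: over a group of exponent two a triangle corresponds to distinct $d_{1}, d_{2} \in D$ with $d_{1} + d_{2} \in D$ (the three edge-differences of a triangle sum to zero), but every element of $D$ has first coordinate of trace $1$, so $d_{1} + d_{2}$ has first coordinate of trace $0$ and hence lies outside $D$. For the spectrum, recall that the eigenvalues of $\operatorname{Cay}(\mathbb{F}_{q}^{3}, D)$ are the character sums $\lambda_{\chi} = \sum_{(x,y) \in P} (-1)^{\operatorname{Tr}(ax + by + c f(x,y))}$, indexed by $\chi = (a,b,c) \in \mathbb{F}_{q}^{3}$; the trivial character gives the top eigenvalue $|D| = d$, and for $\chi \neq 0$ one must show $|\lambda_{\chi}| = O(q)$. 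When $c = 0$ one free variable can be summed out, leaving a one-variable character sum; when $c \neq 0$ the sum is an exponential sum over $\mathbb{F}_{q}$ attached to a curve of bounded degree, and the bound $|\lambda_{\chi}| = O(q)$ follows from Weil's estimate for such sums, together with the fact that squaring is a bijection of $\mathbb{F}_{q}$ in characteristic two (which disposes of the degenerate fibres). This uniform character-sum bound is the one substantive point, and it is precisely the computation carried out in \cite{Alo94}.

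Assembling the three ingredients, $G_{n}$ is an explicitly constructed triangle-free $(n, d, \lambda)$-graph with $d = 2^{2k-2} - 2^{k-1} = \Theta(n^{2/3})$ and $\lambda = O(q) = O(n^{1/3})$, which is exactly the assertion of the lemma. The main obstacle is the eigenvalue estimate $|\lambda_{\chi}| = O(q)$ over all nontrivial characters; the regularity, the vertex count and the triangle-freeness are bookkeeping once the connection set $D$ is written down.
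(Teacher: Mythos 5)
There is a genuine gap, and it sits in the one part of the argument you actually carry out yourself rather than defer to \cite{Alo94}. You take the connection set to be $D=\{(x,y,f(x,y)):(x,y)\in P\}$ with $P\subseteq\{(x,y):\operatorname{Tr}(x)=1\}$, and you prove triangle-freeness by observing that the sum of two generators has first coordinate of trace $0$. But if every generator satisfies $\operatorname{Tr}(d_1)=1$, then $v\mapsto\operatorname{Tr}(v_1)$ is a group homomorphism $\mathbb{F}_2^{3k}\to\mathbb{F}_2$ sending all of $D$ to $1$, so the Cayley graph is bipartite. Concretely, the character $\chi=(1,0,0)$ gives the eigenvalue $\sum_{(x,y)\in P}(-1)^{\operatorname{Tr}(x)}=-|P|=-d$, so $\lambda\ge d=\Theta(n^{2/3})$, and moreover $\alpha(G_n)\ge n/2$; both contradict the conclusion of \cref{lem:alon} (and the entire point of Alon's graphs, whose independence number is $O(n^{2/3})$). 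So the trace-$1$ hyperplane condition cannot be part of any construction satisfying the lemma, your claim that the $c=0$ characters are harmless fails precisely at $\chi=(1,0,0)$, and with that condition removed your triangle-freeness argument disappears. The deferral of the Weil-type character-sum estimate to \cite{Alo94} is acceptable in itself --- the paper likewise states the lemma as a direct consequence of Alon's construction without proof --- but the construction you describe is not Alon's and does not have the stated properties.

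For the record, in Alon's construction one fixes a linear ``bit'' functional on $F=GF(2^k)$, lets $W_0$ and $W_1$ be the nonzero elements on which it takes the values $0$ and $1$ (so $|W_0|=2^{k-1}-1$, $|W_1|=2^{k-1}$), and takes
\[
D=\bigl\{\bigl(\alpha+\beta,\ \alpha^3+\beta^3,\ \alpha^5+\beta^5\bigr):\alpha\in W_0,\ \beta\in W_1\bigr\}\subseteq F^3\cong\mathbb{F}_2^{3k}.
\]
The map $(\alpha,\beta)\mapsto(\alpha+\beta,\alpha^3+\beta^3,\alpha^5+\beta^5)$ is injective on $W_0\times W_1$ (from $s_1=\alpha+\beta\neq0$ and $s_3$ one recovers $\alpha\beta=s_3/s_1+s_1^2$, hence the pair, and the bit condition orders it), which is exactly why $d=|D|=|W_0||W_1|=2^{2k-2}-2^{k-1}$. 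Triangle-freeness is then a genuinely algebraic statement about these power sums --- no parity or hyperplane argument is available, nor can one be, by the bipartiteness obstruction above --- and the bound $\lambda=O(2^k)=O(n^{1/3})$ comes from Weil's estimates applied to the resulting character sums. If you want to present a proof of \cref{lem:alon} rather than cite it, these are the two points you would need to reproduce from \cite{Alo94}.
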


		The expander mixing lemma (see, for example, \cite[Theorem 2.11]{KB06}) asserts that $(n,d,\lambda)$-graphs with small $\lambda$ have low discrepancy, as follows.

		\begin{lem}[Expander mixing lemma]
		Suppose $G$ is an $(n,d,\lambda)$-graph. Then for any disjoint vertex sets $A,B\subset V(G)$, we have
		\[
		\left|e(A,B)-\frac{d}{n}|A||B| \right| \le \lambda \sqrt{|A||B|}.
		\]
		\end{lem}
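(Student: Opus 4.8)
The plan is to run the classical spectral proof of the expander mixing lemma. Let $M$ be the adjacency matrix of $G$, regarded as a symmetric linear operator on $\mathbb{R}^{V(G)}$ equipped with the standard inner product, and let $\mathbf{1}_A,\mathbf{1}_B \in \mathbb{R}^{V(G)}$ denote the indicator vectors of $A$ and $B$. Since $A$ and $B$ are disjoint, every edge between $A$ and $B$ is counted exactly once by $\mathbf{1}_A^{\top} M \mathbf{1}_B$, so $e(A,B) = \mathbf{1}_A^{\top} M \mathbf{1}_B$. Because $G$ is $d$-regular, the all-ones vector is an eigenvector of $M$ with eigenvalue $d$; I would set $u_1 = n^{-1/2}\mathbf{1}$ and complete it to an orthonormal eigenbasis $u_1,\dots,u_n$ with corresponding eigenvalues $d = \mu_1 \ge \mu_2 \ge \dots \ge \mu_n$, where $|\mu_i| \le \lambda$ for all $i \ge 2$ by hypothesis.

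The key step is the decomposition $\mathbf{1}_A = \frac{|A|}{\sqrt n}\,u_1 + f$ and $\mathbf{1}_B = \frac{|B|}{\sqrt n}\,u_1 + g$ with $f,g \perp u_1$ (here I use that $\langle \mathbf{1}_A, u_1\rangle = |A|/\sqrt n$, and similarly for $B$). Expanding the bilinear form and using $M u_1 = d\,u_1$ together with the orthogonality of $u_1$ to $f$ and $g$, the two mixed cross terms vanish and we obtain
\[
    e(A,B) = \mathbf{1}_A^{\top} M \mathbf{1}_B = \frac{|A||B|}{n}\,d + f^{\top} M g,
\]
whose first term is precisely $\tfrac{d}{n}|A||B|$. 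It therefore remains to show $|f^{\top} M g| \le \lambda\sqrt{|A||B|}$. For this I would use that $M$ maps $u_1^{\perp}$ into itself with operator norm $\max_{i\ge 2}|\mu_i| \le \lambda$, so $\|M g\| \le \lambda\|g\|$, and then Cauchy--Schwarz gives $|f^{\top} M g| \le \lambda\|f\|\,\|g\|$. Finally, since $f$ and $g$ are orthogonal projections of $\mathbf{1}_A$ and $\mathbf{1}_B$, we have $\|f\| \le \|\mathbf{1}_A\| = \sqrt{|A|}$ and $\|g\| \le \|\mathbf{1}_B\| = \sqrt{|B|}$, which yields the claimed bound.

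There is essentially no obstacle here, since the statement is a standard fact (attributed to \cite{KB06} in the excerpt); the only points needing a little care are checking that the inner products $\langle \mathbf{1}_A, u_1\rangle$, $\langle \mathbf{1}_B, u_1\rangle$ produce exactly the main term $\tfrac{d}{n}|A||B|$, and noting that discarding the $u_1$-components when passing from $\mathbf{1}_A,\mathbf{1}_B$ to $f,g$ can only decrease the relevant norms. (If one wanted the version without the disjointness hypothesis, the same computation applies verbatim with $e(A,B)$ understood as $\mathbf{1}_A^{\top} M \mathbf{1}_B$.)
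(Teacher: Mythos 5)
Your proof is correct: it is the standard spectral argument for the expander mixing lemma, and every step checks out --- the identity $e(A,B)=\mathbf{1}_A^{\top}M\mathbf{1}_B$ for disjoint $A,B$, the vanishing of the cross terms via $Mu_1=du_1$ and $f,g\perp u_1$, the operator-norm bound $\lambda$ on $u_1^{\perp}$, and the norm inequalities $\|f\|\le\sqrt{|A|}$, $\|g\|\le\sqrt{|B|}$. The paper does not prove this lemma at all; it simply quotes it from Krivelevich--Sudakov \cite[Theorem 2.11]{KB06}, and your argument is precisely the canonical proof given there.
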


		In much the same way as we used \cref{thm:GuoWarnke} in the proof of \cref{thm:ManyEdges}, we can use the expander mixing lemma to prove that the graphs in \cref{lem:alon} do not have bipartite induced subgraphs with high minimum degree.

		\begin{cor}
		Let $n=2^{3k}$, and let $G_n$ be a graph coming from \cref{lem:alon}. Then every induced bipartite subgraph of $G_n$ has minimum degree at most $O(n^{1/3})$.
		\end{cor}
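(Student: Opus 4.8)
The plan is to mimic the argument used for \cref{lem:GW-deduction}\eqref{itm:GW-bip}, with the expander mixing lemma playing the role of \cref{eq:disc}. First I would bound the independence number of $G_n$. If $S \subseteq V(G_n)$ is an independent set with $|S| = s$, split it into two disjoint halves $A, B$ of size $\lfloor s/2 \rfloor$; then $e(A,B) = 0$, so the expander mixing lemma gives $\frac{d}{n}|A||B| \le \lambda\sqrt{|A||B|}$, hence $|A||B| \le (\lambda n/d)^2$, i.e. $s = O(\lambda n / d)$. Plugging in $d = \Theta(n^{2/3})$ and $\lambda = O(n^{1/3})$ from \cref{lem:alon} yields $\alpha(G_n) = O(n^{1/3} \cdot n / n^{2/3}) = O(n^{2/3})$.

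Next I would transfer this to bipartite induced subgraphs. Any induced bipartite subgraph $F$ of $G_n$ has its two colour classes forming independent sets, so $|V(F)| \le 2\alpha(G_n) = O(n^{2/3})$. Let $A, B$ be the two parts of $F$, so all of $F$'s edges lie between $A$ and $B$. Applying the expander mixing lemma to $A$ and $B$ (arbitrarily enlarging the smaller one to match sizes if we want a cleaner bound, exactly as in the proof of \cref{lem:GW-deduction}) gives
\[
    e(F) = e_{G_n}(A,B) \le \frac{d}{n}|A||B| + \lambda\sqrt{|A||B|} = O\!\left(\frac{n^{2/3}}{n} \cdot (n^{2/3})^2 + n^{1/3} \cdot n^{2/3}\right) = O(n).
\]
Since $F$ has $O(n^{2/3})$ vertices and $O(n)$ edges, its average degree is $O(n^{1/3})$, and therefore its minimum degree is at most $O(n^{1/3})$, as claimed.

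I don't expect a serious obstacle here; the only things to be careful about are the routine arithmetic with the exponents (checking that the $\frac{d}{n}|A||B|$ term and the $\lambda\sqrt{|A||B|}$ term are both $O(n)$) and making sure the sets fed into the expander mixing lemma are genuinely disjoint — which they are, being the two colour classes of an induced bipartite subgraph. The structure is entirely parallel to \cref{lem:GW-deduction}\eqref{itm:GW-bip}, just with explicit eigenvalue bounds replacing the random-process discrepancy estimate \cref{eq:disc}.
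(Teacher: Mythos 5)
Your overall strategy is exactly the paper's: bound $\alpha(G_n)=O(n\lambda/d)=O(n^{2/3})$ via the expander mixing lemma, note that the two parts $A,B$ of an induced bipartite subgraph are independent sets, and then apply the mixing lemma to $A,B$ to bound the number of edges. (Your aside about enlarging the smaller part is unnecessary, since the mixing lemma as stated does not require $|A|=|B|$.) However, your final step contains a genuine logical slip: from ``$F$ has $O(n^{2/3})$ vertices and $O(n)$ edges'' you cannot conclude that the average degree is $O(n^{1/3})$, because you are dividing an upper bound on $e(F)$ by an upper bound on $|V(F)|$. If $F$ happened to have far fewer than $n^{2/3}$ vertices, the bound $e(F)=O(n)$ alone would say nothing useful about the average degree; for instance, a hypothetical bipartite graph with $|A|=|B|=\sqrt n$ and $n$ edges satisfies both of your stated bounds yet has average degree $\sqrt n$, not $O(n^{1/3})$. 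The inference pattern is invalid as written.

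The fix is short and is what the paper does: do not substitute the worst-case sizes into the edge bound, but keep the dependence on $|A|,|B|$. Since $|A|,|B|\le\alpha(G_n)=O(n\lambda/d)$, the mixing lemma gives
\[
e(A,B)\;\le\;\frac{d}{n}|A||B|+\lambda\sqrt{|A||B|}\;\le\;\left(\frac{d}{n}\,\alpha(G_n)+\lambda\right)\max\{|A|,|B|\}\;=\;O(\lambda)\max\{|A|,|B|\}\;=\;O\left(n^{1/3}\right)|V(F)|,
\]
so the average degree $2e(F)/|V(F)|$, and hence the minimum degree, is $O(n^{1/3})$. (Equivalently: taking $|A|\ge|B|$, some vertex of $A$ has degree at most $e(A,B)/|A|=O(n^{1/3})$.) With this one-line repair your argument coincides with the paper's proof.
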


		\begin{proof}
			From the expander mixing lemma, we learn that $\alpha(G_n) \le 2(n\lambda/d + 1) = O(n^{2/3})$. Moreover, for any two sets $A,B\subset V(G_n)$ of size $O(n^{2/3})$, the expander mixing lemma gives
			\[
				e(A,B) \le n^{-1/3}|A||B|+ n^{1/3}\sqrt{|A||B|}=O(n^{1/3}\max{\{|A|,|B|\}}).
			\]
			Thus any induced bipartite subgraph of $G_n$ has minimum degree at most $O(n^{1/3})=O(d^2/n)$.
		\end{proof}

		Now, we can prove the upper bound in \cref{thm:refined-degree}~\eqref{itm:shart-large-deg} by taking blowups as in previous proofs.
		
		\begin{proof}[Proof of the upper bound in \Cref{thm:refined-degree}~\eqref{itm:shart-large-deg}]
			Note that the upper bound holds trivially when $d \ge n / 32$. Thus, we may assume that $n^{2/3} \le d \le n/32$. Let $k$ be maximal such that $2^{3k} \le n$ and $\floor{n/2^{3k}} d(G_{2^{3k}}) \ge d$, where $G_{2^{3k}}$ is the graph given by \cref{lem:alon}. We claim that such $k$ exists. Indeed, let $\ell$ be an integer such that $n / (32 d) \le 2^{\ell} \le n / (16d)$. Then $2^{3\ell} \le (n / (16d))^3 \le n/2$, and
			\[
				\floor{\frac{n}{2^{3\ell}}} d(G_{2^{3\ell}}) 
				\ge \frac{n}{2^{3\ell +1}}\left(2^{2\ell-2} - 2^{\ell-1}\right) 
				= \frac{n}{2^{\ell}}\left(\frac{1}{8} - \frac{1}{4 \cdot 2^{\ell}}\right)
				\ge \frac{n}{16 \cdot 2^{\ell}} 
				\ge d.
			\]
			This implies that there exists $k$ that satisfies both inequalities, as claimed.
			As $d(G_{2^{3k}})=2^{2k-2}-2^{k-1}=\Theta((2^{3k})^{2/3})$, this means that $n':=2^{3k}=\Theta((n/d)^3)$. Blow up the vertices of $G_{n'}$ into independent sets with almost-equal sizes (about $n/n'$) to obtain an $n$-vertex graph with minimum degree at least $d$, in which every induced bipartite subgraph has minimum degree at most $O((n/n')(n')^{1/3})=O(d^2/n)$.
		\end{proof}

\section{Concluding remarks} \label{sec:conclusion}

	In this paper we have proved that for every fixed graph $H$, any $H$-free graph with minimum degree $d$ contains an induced bipartite subgraph of minimum degree at least $\Omega\left(\log d/\log \log d\right)$. It would be very interesting to improve this to $\Omega\left(\log d\right)$, and thus fully confirm \cref{conj:log} of Esperet, Kang and Thomass\'e. We note that by the methods we employed in \cref{sec:reduction}, an improvement to $\Omega\left(\log d\right)$ when $H$ is a triangle, would imply the same for general $H$.

	Given a fixed graph $H$, let $g_H(n,d)$ be the maximum $g$ such that every $H$-free graph with $n$ vertices and minimum degree at least $d$ contains an induced bipartite subgraph with minimum degree at least $g$. When $H=K_3$, we gave quite accurate estimates on $g_{H}(n,d)$. It would be interesting to study this function further for other forbidden subgraphs $H$. Some of our methods can be generalised, but the overall picture is not clear, since the behaviour of $g_H(n,d)$ seems closely related to the (hard) Ramsey problem of bounding independence number of $H$-free graphs. See \cite[Section 6]{BVKP} for more explicit problems and conjectures regarding the function $g_H(n, d)$.
		
	There is an interesting connection between the existence of induced bipartite graphs with large minimum degree and the \emph{fractional chromatic number}. Recall that a \emph{fractional colouring} of a graph $G$ is an assignment of non-negative weights to the independent sets of $G$, in such a way that for each vertex $u$, the sum of weights of independent sets that contain $u$ is at least $1$. The \emph{fractional chromatic number} of $G$ is the minimum sum of weights of independent sets, over all possible fractional colourings. Esperet, Kang and Thomass\'e \cite[Theorem 3.1]{EKT} proved that a graph with minimum degree $d$ and fractional chromatic number at most $k$ has a bipartite induced subgraph of average degree at least $d/k$. Cames van Batenburg, de Joannis de Verclos, Kang and Pirot \cite{BVKP} exploited this connection in order to prove \Cref{thm:refined-degree} \eqref{itm:large-deg} by proving an upper bound on the fractional chromatic number of triangle-free graphs  on $n$ vertices with minimum degree $d$, which is tight when $d$ is sufficiently large with respect to $n$. It is plausible that a similar approach could be used to obtain an alternative proof of \Cref{thm:refined-degree} \eqref{itm:medium-deg}, or even to improve our bound. This raises the following question: how large can the fractional chromatic number of a $d$-degenerate triangle-free graph be? (One can assume that the graph is $d$-degenerate, as in \Cref{sec:lower}.) More precisely, is it true that the fractional chromatic number of such a graph is at most $O(d / \log d)$? Harris \cite{Ha} conjectured that the answer to the latter question is `yes'. An affirmative answer to this question would be tight and would prove \Cref{conj:log}. We note that under the stronger assumption that the graph has maximum degree $d$ it was proved by Johansson \cite{Joh} (see also Molloy \cite{Mo}) that already the chromatic number is bounded by $O(d/\log d)$. On the other hand, this is no longer the case for $d$-degenerate graphs, as shown in \cite{AKS1}.

\subsection*{Acknowledgements}
 
	We would like to thank the anonymous referees for their helpful comments and suggestions.

\end{document}